\documentclass[a4paper, 10pt, conference]{ieeeconf}      % Use this line for a4
                                                          % paper

\IEEEoverridecommandlockouts                              % This command is only
                                                          % needed if you want to
                                                          % use the \thanks command
\overrideIEEEmargins
% See the \addtolength command later in the file to balance the column lengths
% on the last page of the document

% The following packages can be found on http:\\www.ctan.org
%\usepackage{graphics} % for pdf, bitmapped graphics files
%\usepackage{epsfig} % for postscript graphics files
%\usepackage{mathptmx} % assumes new font selection scheme installed
%\usepackage{times} % assumes new font selection scheme installed
\usepackage{amsmath,bm} % assumes amsmath package installed
\usepackage{amssymb,mathrsfs,color,nameref}  % assumes amsmath package installed

\usepackage{graphicx} % Required for inserting images
\usepackage{cite,supertabular}

%---------

%\documentclass[letterpaper, 10pt, conference]{ieeeconf}  % Comment this line out if you need a4paper

%\documentclass[a4paper, 10pt, conference]{ieeeconf}      % Use this line for a4 paper

%\IEEEoverridecommandlockouts                              % This command is only needed if
                                                          % you want to use the \thanks command

\newtheorem{theorem}{Theorem}[section]
\newtheorem{lemma}[theorem]{Lemma}

\newtheorem{definition}[theorem]{Definition}
\newtheorem{remark}[theorem]{Remark}

\newtheorem{example}[theorem]{Example}

\newenvironment{proofof}[1]{\smallskip\noindent{\emph{Proof~of~#1:}}%
  \hspace{1pt}}{\hspace{-5pt}{\nobreak\quad\nobreak\hfill\nobreak%
    $\square$\vspace{2pt}\par}\smallskip\goodbreak}

\newcommand{\spt}{\mathop{\rm spt}}

\newcommand{\co}{\mathop{\rm co}}

\newcommand{\id}{\mathop{\rm id}}
\newcommand{\dom}{\mathop{\rm dom}}
\newcommand{\diam}{\mathop{\rm diam}}
\newcommand{\lip}{\mathop{\rm Lip}}

\newcommand{\Lip}{\mathrm{Lip}}

\pagestyle{empty} % Removes all the page numbers (except for the title page)

\pdfminorversion=4

\begin{document}

% \def\BibTeX{{\rm B\kern-.05em{\sc i\kern-.025em b}\kern-.08em
%     T\kern-.1667em\lower.7ex\hbox{E}\kern-.125emX}}
% \markboth{\journalname, VOL. XX, NO. XX, XXXX 2017}
% {Author \MakeLowercase{\textit{et al.}}: Preparation of Papers for IEEE Control Systems Letters (August 2022)}

\title{Stabilization via localized controls in nonlocal models of crowd dynamics}

\author{
Nikolay Pogodaev$^{a}$,
Francesco Rossi$^{b}$
\thanks{$^{a}$ Dipartimento di Matematica ``Tullio Levi-Civita'', Università degli Studi di Padova; \texttt{pogodaev@math.unipd.it}}
\thanks{$^{b}$  Dipartimento di Culture del Progetto, Universtà Iuav di Venezia; \texttt{francesco.rossi@iuav.it} He is a member of GNAMPA (INdAM).}
}

\maketitle
\thispagestyle{empty}

%%%%%%%%%%%%%%%%%%%%%%%%%%%%%%%%%%%%%%%%%%%%%%%%%%%%%%%%%%%%%%%%%%%%%%%%%%%%%%%%
\begin{abstract}

  We consider a control system driven by a nonlocal continuity equation.
  Admissible controls are Lipschitz vector fields acting inside a fixed open set.
  We demonstrate that small perturbations of the initial measure, traced along Wasserstein geodesics, may be neutralized by admissible controls.
  More specifically, initial perturbations of order \( \varepsilon \) can be reduced to order \( \varepsilon^{1+\kappa} \), where \( \kappa \) is a positive constant.
\end{abstract}

% \begin{IEEEkeywords}
% continuity equations, stabilization, control theory, optimal transport, Wasserstein geodesics
% \end{IEEEkeywords}

\section{Introduction}
\label{sec:introduction}

In recent years, the study of systems describing a crowd of interacting autonomous agents has attracted significant interest from the mathematical and control communities.
A better understanding of such interaction phenomena can have a strong impact on several key applications, such as road traffic and egress problems for pedestrians.
For a few reviews on this topic, we refer to~\cite{axelrod,CPTbook,helbing,SepulchreReview}.%\cite{axelrod,active1,camazine,CPTbook,helbing,jackson,MT14,SepulchreReview}.

Beside the description of interactions, it is now relevant to study problems of crowd control, i.e., of controlling such systems by acting on few agents, or within a small subset of the configuration space.
Basic problems for such models include controllability (i.e., reaching a desired configuration), optimal control (i.e., the minimization of a given functional) and stabilization (i.e., counteract perturbations to stay around a given configuration/trajectory).
Many results in these directions can be found in \cite{blaq,DMRcontrol,FS,carmona,bullo,achdou1,CHM,fornasierMeanfieldOptimalControl2018,chertovskihOptimalControlNonlocal2023,Ciampa2021185}.

There is a wealth of models available to describe crowds.
Here, we choose one of the most popular in the control community: the continuity equation with non-local velocity.
Here, the standard setup for the control system is as follows:
\begin{equation}
  \label{eq:conteq}
\partial_t\mu_t+\nabla_x\cdot \left(\left(V_t(x,\mu_t)+u_t(x)\right)\mu_t\right) = 0,
\end{equation}
where \( u_t \) is an external vector playing the role of control.
As usual, Equation~\eqref{eq:conteq} should be understood in the weak (distributional) sense.

The state space of~\eqref{eq:conteq} consists of all compactly supported probability measures on \( \mathbb{R}^d \) and is denoted by \( \mathcal{P}_c(\mathbb{R}^d) \).
We equip it with the quadratic Wasserstein distance \( \mathcal{W}_2 \).
This distance is closely related to the optimal transportation problem, see Section \ref{sec:wass}. As an important consequence, it turns \( \mathcal{P}_c(\mathbb{R}^d) \) into a geodesic space: any pair of points \( \varrho_0, \varrho_1 \) can be joined by an absolutely continuous curve of length \( \mathcal{W}_2(\varrho_0,\varrho_1) \), called a \emph{geodesic}.

To complete the description of the control system, we need to specify a class of admissible controls.
In this paper, we choose the class of \emph{localized Lipschitz} controls. By \emph{Lipschitz control} we mean any time-dependent vector field \( u \) such that
\begin{equation}
  \label{eq:CL}
u\in L^\infty\left([0,T];\Lip(\mathbb{R}^d;\mathbb{R}^{d})\right),
\end{equation}
where \( \Lip(\mathbb{R}^d;\mathbb{R}^{d}) \) is the Banach space of Lipschitz maps \( f\colon \mathbb{R}^d\to \mathbb{R}^d \).
We say that a Lipschitz control \( u \) is \emph{localized} if it acts only on a certain subset of the \( x \)-space.
Formally, given a non-empty open subset \( \omega \) of \( \mathbb{R}^d \), we require that \( \spt( u_t)\subset \omega \) for all \( t\in [0,T] \).
In what follows, such controls will be also called \emph{admissible}.
The corresponding solutions \( \mu^u_t \) of~\eqref{eq:conteq} will be called \emph{admissible trajectories}.

The regularity assumption~\eqref{eq:CL} is rather natural.
It guarantees~\cite{ambrosioTransportEquationCauchy2008} that the continuity equation
\begin{equation}
  \label{eq:local}
\partial_t\mu_t+\nabla_x\cdot \left(u_t(x)\mu_t\right) = 0
\end{equation}
has a \emph{unique} weak solution for any initial data \( \mu_0=\varrho_0 \).
If~\eqref{eq:CL} is violated, then weak solutions are typically nonunique.
The same holds for~\eqref{eq:conteq} under the standard regularity hypothesis on \( V \) detailed below in Assumption \( (A_1) \).

The stabilization problem that we study here can be roughly stated in the following way: Let \( \mu_t \) be the trajectory of~\eqref{eq:conteq} corresponding to an initial measure \( \varrho_0 \) and the zero control \( u=0 \); we call \( \mu_t \) the \emph{reference trajectory}.
Suppose that we slightly perturb \( \varrho_0 \), getting a new measure \( \varrho \).
Can we steer the system back towards the reference trajectory by using only localized Lipschitz controls? This intuitive idea of stabilization is made precise in the following definition.

\begin{definition}
  \label{def:stab}
    We say that a set $A\subset \mathcal{P}_c(\mathbb{R}^d)$ is \( \kappa \)-\emph{stabilized around the reference trajectory \( \mu_t \)} of \eqref{eq:conteq} if there exists \( C>0 \) such that for any \( \varepsilon>0 \) and $\varrho\in A$ with \( \mathcal{W}_2(\varrho,\varrho_0)< \varepsilon \) one can find an admissible $u$ such that the corresponding trajectory $\mu_t^u$ of \eqref{eq:conteq} starting from $\varrho$ satisfies
  \begin{equation}
    \label{eq:stabilization}
     \mathcal{W}_2\left(\mu^u_{T},\mu_{T}\right)< C\varepsilon^{1+\kappa}.
   \end{equation}
   In other words, whatever \( \varepsilon \) we choose, any point of the set \( A\cap{\bf B}_{\varepsilon}(\varrho_0) \) can be steered into the ball \( {\bf B}_{C\varepsilon^{1+\kappa}}(\mu_{T}) \) by an admissible control, see~Fig~\ref{fig:paths}.
   Here \( {\bf B}_\varepsilon(\varrho_0) \) denotes the open Wasserstein ball of radius \( \varepsilon \) centered at \( \varrho_0 \).
 \end{definition}

\begin{figure}[htb]
  \begin{center}
    \includegraphics[width=0.45\textwidth]{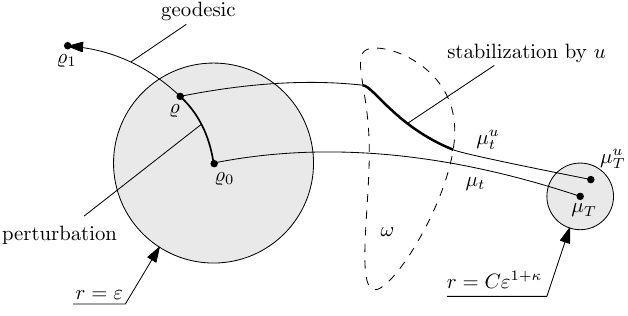}
    \caption{
      The concept of stabilization: we perturb \( \varrho_0 \) to a new measure \( \varrho \). We reduce the perturbation rate with a Lipschitz control localized in \( \omega \).
    }
\label{fig:paths}
\end{center}
\end{figure}

  Now, we would like to understand which sets \( A \) of initial measures \emph{are} \( \kappa \)-stabilized.
  A fruitful idea, as we shall see, is to generate such sets by specifying admissible perturbations of \( \varrho_0 \).
  More specifically, we define the set of starting points
\[
S^{\Pi}_r(\varrho_0) := \left\{\varrho_{\varepsilon}\text{ where } \varrho\in \Pi(\varrho_0),\; \varepsilon\in [0,r]\right\},\; r\in [0,1],
\]
where \( \Pi(\varrho_0) \) is an arbitrary set of curves \( \varrho\colon [0,1]\to \mathcal{P}_c(\mathbb{R}^d) \) with the starting point \( \varrho_0 \).
In what follows, the set \( \Pi(\varrho_0) \) will be called the \emph{set of initial perturbations}.

Given \( \Pi(\varrho_0) \), we may ask whether the corresponding starting set \( S^{\Pi}_r(\varrho_0) \) is \( \kappa \)-stabilized for some \( \kappa,r>0 \)?
Example 4.8 from~\cite{PR2024}, which is just an elaborate version of Example~\ref{ex:nostabilization}, shows that \( \Pi(\varrho_0) \) cannot be composed of \emph{all} absolutely continuous curves originated at \( \varrho_0 \).
Hence, we must look for smaller sets of initial perturbations.

In the present paper, we study a very natural class of initial perturbations composed of Wasserstein geodesics starting at \( \varrho_0 \).
More specifically, we fix a compact convex set \( \Omega \subset \mathbb{R}^d\) and \( \varrho_0\in \mathcal{P}_{ac}(\Omega) \), where \( \mathcal{P}_{ac}(\Omega) \) denotes the set of all \emph{absolutely continuous} probability measures on \( \Omega \).
It is known that \( \varrho_0 \) can be joined with any other measure \( \varrho_1\in \mathcal{P}_{ac}(\Omega) \) by a unique constant speed geodesic \( \gamma_{\varrho_0,\rho_1} \) (see Section~\ref{sec:wass} for details).
Then we take
\begin{equation}
  \label{eq:Pi}
  \Pi(\varrho_0):= \left\{ \gamma_{\varrho_0,\varrho_1} \text{ where } \varrho_1\in \mathcal{P}_{ac}(\Omega)\right\}.
\end{equation}
We will show that with this choice of the perturbation class the starting set \( S^{\Pi}_r(\varrho_0) \) is \( \kappa \)-stabilizable for some \( \kappa,r>0 \).
The precise formulation of the result will be given below.

In~\cite{PR2024}, we studied the same stabilization problem as presented here, but for a different perturbation class:
\[
  \Pi(\varrho_0)\!:=\! \left\{
    \begin{matrix}
    \varepsilon\mapsto \Phi^v_{0,\varepsilon\sharp}\varrho_0\,\colon v\in L^{\infty}\left([0,1]; \Lip(\mathbb{R}^d;\mathbb{R}^d)\right),\\ \|v_\varepsilon\|_{\infty}\le 1,\,\lip(v_\varepsilon)\le1,\, \text{for a.e. }\varepsilon\in[0,1]
    \end{matrix}
    \right\},
\]
where \( \Phi^v \) denotes the flow of the time-dependent vector field \( v \), \( \|f\|_{\infty} \) the usual \( \sup  \)-norm and \( \lip(f) \) the smallest Lipschitz constant of \( f\).
We showed that the corresponding starting set \( S^{\Pi}_{r}(\varrho_0) \) is \( \kappa \)-stabilized for some \( \kappa,r>0 \).
In other words, small initial perturbations generated by Lipschitz controls can be neutralized by \emph{localized} Lipschitz controls. 
A related problem of controllability by localized Lipschitz controls, was considered for Equation~\eqref{eq:conteq} in~\cite{DMRcontrol,DMRmin}.
However, these papers dealt only with the case \( V_t(x,\mu)=V(x) \).

If the perturbation class in~\cite{PR2024} is relevant from the control theoretic viewpoint, the class~\eqref{eq:Pi} considered here is important for geometrical reasons.
Indeed, it is known that the tangent space at \( \varrho_0 \) to \( \mathcal{P}_c(\mathbb{R}^d) \) can be seen as a completion of the set of all geodesics starting from \( \varrho_0 \), as is shown in~\cite{gigli2011inverse}.

\subsection{Assumptions and main result}

We first  precisely state the assumptions imposed on the vector field \( V \), the initial measure \(\varrho_0\) and the action set \( \omega \).
\textbf{Assumption \((A_1)\):} The nonlocal time-dependent vector field  \( V\colon [0,T]\times \mathbb{R}^d \times \mathcal{P}_c(\mathbb{R}^{d})\to \mathbb{R}^d \) satisfies
\begin{itemize}
	\item \( V \) is measurable in \( t \);
	\item \( V \) is bounded and Lipschitz in \( x \) and \( \mu \), i.e., there exists \( M > 0 \) such that
	    \begin{gather*}
        \left|V_t(x,\mu)\right|\le M,\\
        \left|V_{t}(x,\mu)-V_{t}(x',\mu')\right|\le M\left(|x-x'|+ \mathcal{W}_{2}(\mu,\mu')\right),
	  \end{gather*}
		for all \(x,x'\in \mathbb{R}^d \), \( \mu,\mu'\in \mathcal{P}_c(\mathbb{R}^d) \), \(t\in [0,T] \).
\end{itemize}

This assumption, which dates back at least to~\cite{piccoliTransportEquationNonlocal2013}, ensures existence and uniqueness of solutions of the associated continuity equation \eqref{eq:conteq}.
Given an initial condition $\varrho_0$, we denote by $\mu_t$ the solution of~\eqref{eq:conteq} corresponding to the initial condition \( \mu_0=\varrho_0 \) and the zero control \( u=0 \) (i.e., the reference trajectory).
We denote by \( \Phi^{V} \) the flow of the corresponding \emph{nonautonomous} vector field \( (t,x)\mapsto V_t(x,\mu_{t}) \).

We now set the geometric condition on $\omega$, the set on which the control action is localized.

\textbf{Assumption \((A_2)\):} Given the initial datum of the reference trajectory $\varrho_0$, for any \( x\in \spt (\varrho_0) \) there exists \( t\in (0,T) \) such that \( \Phi^V_{0,t}(x)\in \omega \).

This assumption is illustrated in Figure~\ref{f-geometric}. As stated above, this condition is very natural, since it requires the reference trajectory to cross the set in which the localized control operates.
We highlight that this condition is a property of the reference trajectory and not of its perturbations.

\begin{figure}[htb]
  \begin{center}
    \includegraphics[width=0.34\textwidth]{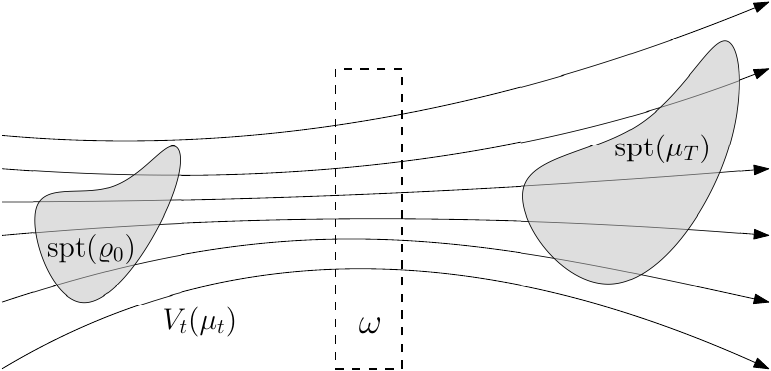}
\caption{Geometric Condition \( (A_2) \):
any trajectory of \( V_t(\mu_t) \) issuing from \( \spt(\varrho_0) \) crosses \( \omega \) by the time \( T \).}
\label{f-geometric}
\end{center}
\end{figure}

Now we are ready to state the main result of the paper.

\begin{theorem}
  \label{thm:main}
  Let a nonlocal vector field \( V \), an open set \( \omega\subset \mathbb{R}^d \) and an absolutely continuous measure \( \varrho_0\in \mathcal{P}_c(\mathbb{R}^d)\) satisfy Assumptions \( (A_{1,2}) \).
  Let \( \Pi(\varrho_0) \) be defined as in~\eqref{eq:Pi} for some convex compact set \( \Omega \subset \mathbb{R}^d \).
  Then, there exist $\kappa,r>0$ such that \( S_r^{\Pi}(\varrho_0) \) is \( \kappa \)-stabilizable around the reference trajectory $\mu_t$ of equation~\eqref{eq:conteq}.

  The numbers \( \kappa \), \( r \) as well as \( C \) from the definition of \( \kappa \)-stabilizability only depend on \(\Omega, T,M,\omega,\spt(\varrho_0)\).
\end{theorem}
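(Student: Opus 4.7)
The plan is to reduce the statement to the analogous result of~\cite{PR2024} by approximating a Wasserstein-geodesic perturbation by a perturbation generated by a Lipschitz flow, and then balancing the approximation error against the stabilization error.

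First, I would use Brenier's theorem: since both \( \varrho_0 \) and \( \varrho_1 \) lie in \( \mathcal{P}_{ac}(\Omega) \), the constant-speed geodesic admits the representation \( \gamma_{\varrho_0,\varrho_1}(\varepsilon)=(\id+\varepsilon\xi)_\sharp\varrho_0 \), where \( \xi:=\nabla\phi-\id \) for some convex Brenier potential \( \phi \). Because \( \Omega \) is compact and convex, \( \xi \) is bounded on \( \spt(\varrho_0) \) by \( \diam(\Omega) \), but it is in general only \( L^2(\varrho_0) \), which is too weak to apply the regularity theory behind~\eqref{eq:CL}. The second step would then be a quantitative Lusin-type approximation: for a small parameter \( \sigma>0 \) (to be optimized), choose a closed set \( K\subset\spt(\varrho_0) \) of measure \( \ge 1-\sigma \) on which \( \xi|_K \) is Lipschitz, and extend it (à la McShane, keeping the sup norm) to a Lipschitz \( \tilde\xi:\mathbb{R}^d\to\mathbb{R}^d \) with constant \( L(\sigma) \). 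A direct computation then gives
\[
  \mathcal{W}_2\!\bigl((\id+\varepsilon\xi)_\sharp\varrho_0,(\id+\varepsilon\tilde\xi)_\sharp\varrho_0\bigr)\le \varepsilon\,\|\xi-\tilde\xi\|_{L^2(\varrho_0)}\le C\,\varepsilon\sqrt{\sigma}.
\]

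Third, I would observe that the auxiliary curve \( \tau\mapsto(\id+\tau\tilde\xi)_\sharp\varrho_0 \), \( \tau\in[0,\varepsilon] \), is the trajectory at time \( \varepsilon \) of the continuity equation driven by a bounded Lipschitz (time-dependent, after change of variables) vector field, provided \( \varepsilon L(\sigma)<1 \). After rescaling time and amplitude so as to put the driving field into the unit sup-norm / unit Lipschitz ball used in~\cite{PR2024}, the endpoint \( (\id+\varepsilon\tilde\xi)_\sharp\varrho_0 \) falls in the perturbation class treated there. Invoking the main theorem of~\cite{PR2024} under the same assumptions \( (A_{1,2}) \) produces an admissible localized Lipschitz control \( u \) with
\[
  \mathcal{W}_2(\mu^u_T,\mu_T)\le C'\bigl(\varepsilon\,L(\sigma)\bigr)^{1+\kappa_0}
\]
for some \( \kappa_0>0 \). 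Combining with the previous bound via the triangle inequality for the Wasserstein distance propagated through~\eqref{eq:conteq} (which adds at most a Grönwall factor depending on \( M \) and \( T \), cf.~Assumption \( (A_1) \)) yields a total error \( \le C_1\varepsilon\sqrt{\sigma}+C_2\bigl(\varepsilon L(\sigma)\bigr)^{1+\kappa_0} \).

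The last step is optimization: choose \( \sigma=\varepsilon^\alpha \) for a suitable \( \alpha\in(0,1) \); assuming the approximation gives a polynomial dependence \( L(\sigma)\lesssim \sigma^{-\beta} \), the exponents match and yield a genuine \( \varepsilon^{1+\kappa} \) bound with an explicit \( \kappa=\kappa(\alpha,\beta,\kappa_0)>0 \). The smallness of \( r \) is dictated by the constraint \( \varepsilon L(\sigma)<1 \). The main obstacle, in my view, lies precisely in obtaining a \emph{quantitative} control of \( L(\sigma) \) as \( \sigma\to 0 \): since the Brenier potential \( \phi \) is only semiconvex and its gradient need not be \( C^{0,1} \), a naive mollification is unavailable, and one must exploit the Aleksandrov a.e.-differentiability of \( \nabla\phi \) together with a covering argument (or Caffarelli-type regularity when \( \varrho_0,\varrho_1 \) have well-behaved densities on the convex \( \Omega \)) to control the growth of \( L \) in \( \sigma \). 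A secondary, but more routine, difficulty is checking that the localization construction of~\cite{PR2024}—which relies on Assumption \( (A_2) \) and compactness of \( \spt(\varrho_0) \) to split the support into finitely many pieces entering \( \omega \) at prescribed times—remains quantitatively stable under the rescaling used to fit \( \tilde\xi \) into their framework, so that the constants \( C,\kappa_0 \) depend only on \( \Omega,T,M,\omega,\spt(\varrho_0) \).
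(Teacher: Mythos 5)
Your reduction to the Lipschitz-flow perturbation class of~\cite{PR2024} hinges on a step that you yourself flag as the main obstacle, and that step is in fact a genuine gap: there is no quantitative Lusin-type bound \( L(\sigma)\lesssim\sigma^{-\beta} \) for the Brenier map under the hypotheses of the theorem. Lusin's theorem gives a Lipschitz restriction of \( \nabla\phi \) on a set of measure \( 1-\sigma \) with \emph{no} rate on the Lipschitz constant, and Caffarelli regularity is unavailable here because \( \varrho_0,\varrho_1 \) are merely absolutely continuous on \( \Omega \) (no density bounds from above or below are assumed). Without a polynomial rate the final optimization \( \sigma=\varepsilon^\alpha \) collapses and no positive \( \kappa \) comes out. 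So the proposal, as written, does not close.

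The paper sidesteps this entirely by a change of direction that your argument misses: instead of trying to make the \emph{forward} map \( \mathcal{T}_{0,\varepsilon}=(1-\varepsilon)\id+\varepsilon\nabla\phi \) Lipschitz (it need not be), it uses the \emph{backward} intermediate map
\( \mathcal{T}_{\varepsilon,0}=\tfrac{1}{1-\varepsilon}\bigl(\id+\tfrac{\varepsilon}{1-\varepsilon}\,\partial\phi\bigr)^{-1} \),
which is automatically \( \tfrac{1}{1-\varepsilon} \)-Lipschitz because the resolvent of the maximal monotone operator \( \partial\phi \) is nonexpansive; the closeness to the identity, \( |J_\lambda(x)-x|\le\lambda R \), follows from a Lipschitz extension of \( \phi \) off \( \spt(\varrho_0) \). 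This is exactly why the regular-perturbation class of Definition~\ref{def:reg_pert} is phrased in terms of a map \( \Psi_\varepsilon \) with \( \Psi_{\varepsilon\sharp}\varrho_\varepsilon=\varrho_0 \) (pushing the perturbed measure \emph{back}), and the relevant input from~\cite{PR2024} is Theorem~\ref{thm:main2} for that class, not the forward Lipschitz-flow class you invoke. Two further points you omit: the resolvent argument needs \( \spt(\varrho_0) \) convex (to extend \( \phi \) as an \( R \)-Lipschitz convex function), and the general case is then obtained by approximating \( \varrho_0 \) with a nearby \( \tilde\varrho_0 \) of convex support and absorbing the error through Lemma~\ref{lem:cont} and the enlargement result of Theorem~\ref{thm:approx}; your proposal does not treat nonconvex supports at all.
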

\vspace{3pt}

A drawback of this statement is that, the starting set \( S^{\Pi}_r(\varrho_0) \) is a very small subset of the Wasserstein ball~\( \overline{\bf B}_{r}(\varrho_0) \).
In fact, one can show that it is nowhere dense in it (see e.g. \cite{PR2024}). The following result allows us to cope, to some extent, with the smallness of \( S^{\Pi}_r(\varrho_0) \).

\begin{theorem}[{\!\!\cite[Cor. 1.4]{PR2024}}]
    \label{thm:approx}
  Let all the assumptions of Theorem~\ref{thm:main} hold, \( r,\kappa>0 \) be so that \( S_r^{\Pi}(\varrho_0) \) is \( \kappa \)-stabilized and \( \alpha(r) := -r^{1+\kappa}/\log r \).
  Then, any point of the open enlargement ${\bf B}_{\alpha(r)}(S^{\Pi}_r(\varrho_0))$ can be steered by an admissible control into the ball \( {\bf B}_{C_1r^{1+\kappa}}(\mu_T) \).
  The constant \( C_1>0 \) only depends on \( T \), \( M \), \( \omega \), \( \spt(\varrho_0) \).
\end{theorem}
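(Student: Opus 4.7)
The statement reduces to Theorem \ref{thm:main} via a transfer argument based on a Grönwall-type stability estimate for the nonlocal continuity equation, and the plan is the following.

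First, I would unpack the definition of the open enlargement. Any $\nu\in{\bf B}_{\alpha(r)}(S^{\Pi}_r(\varrho_0))$ lies within $\mathcal{W}_2$-distance $\alpha(r)$ of some $\sigma=\varrho_\varepsilon$ with $\varrho\in\Pi(\varrho_0)$ and $\varepsilon\in[0,r]$. Since $\mathcal{W}_2(\sigma,\varrho_0)\le\varepsilon\le r$, Theorem~\ref{thm:main} provides an admissible control $u$, localized in $\omega$, whose corresponding trajectory $\mu^u[\sigma]$ of \eqref{eq:conteq} starting from $\sigma$ satisfies
\[
\mathcal{W}_2\bigl(\mu^u_T[\sigma],\mu_T\bigr)<C\varepsilon^{1+\kappa}\le Cr^{1+\kappa}.
\]

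Second, I would feed the \emph{same} control $u$ into \eqref{eq:conteq} with the perturbed initial datum $\nu$, producing a trajectory $\mu^u[\nu]$. Because the driving velocity $(t,x,\mu)\mapsto V_t(x,\mu)+u_t(x)$ is bounded and Lipschitz in both $x$ and $\mu$ (by $(A_1)$ and \eqref{eq:CL}), the standard well-posedness and stability theory for nonlocal continuity equations yields an estimate of the form
\[
\mathcal{W}_2\bigl(\mu^u_T[\nu],\mu^u_T[\sigma]\bigr)\le e^{LT}\,\mathcal{W}_2(\nu,\sigma)<e^{LT}\alpha(r),
\]
where $L$ depends on $M$ and on $\operatorname{ess\,sup}_t\lip(u_t)$. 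The triangle inequality then gives
\[
\mathcal{W}_2\bigl(\mu^u_T[\nu],\mu_T\bigr)<Cr^{1+\kappa}+e^{LT}\alpha(r),
\]
and admissibility of $u$ is automatic since we reuse the control from Theorem~\ref{thm:main}.

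The principal obstacle is the quantitative control of $L$, or equivalently of $\lip(u_t)$, in terms of $\varepsilon$. The specific form $\alpha(r)=-r^{1+\kappa}/\log r$ is dictated by balancing the two summands above: to achieve the desired bound $C_1 r^{1+\kappa}$, one needs $e^{LT}\alpha(r)\lesssim r^{1+\kappa}$, i.e.\ $e^{LT}\le\tilde{C}(-\log r)$, which in turn forces $L$ to grow at most like $\log|\log r|$. The remaining work is therefore to revisit the construction of the stabilizing control in the proof of Theorem~\ref{thm:main} and extract an explicit bound on $\lip(u_t)$ as a function of $\varepsilon$; once this (slow logarithmic) growth is confirmed, the two terms above combine to yield $C_1 r^{1+\kappa}$ with a constant $C_1$ depending only on $T$, $M$, $\omega$, and $\spt(\varrho_0)$, as claimed.
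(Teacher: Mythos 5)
The paper does not actually write out a proof of this statement: it is imported from \cite{PR2024} (Cor.~1.4), and the text at the end of Section~\ref{sec:MainResult} explicitly defers the derivation to that reference. Your scheme --- approximate $\nu$ by a point $\sigma=\varrho_\varepsilon\in S^{\Pi}_r(\varrho_0)$ at distance less than $\alpha(r)$, reuse the stabilizing control for $\sigma$ on the perturbed datum $\nu$, invoke the Gr\"onwall-type $\mathcal{W}_2$-stability estimate for the nonlocal continuity equation, and close with the triangle inequality, with $\alpha(r)$ calibrated so that the Gr\"onwall term stays below a multiple of $r^{1+\kappa}$ --- is exactly the intended argument, so in structure you match the paper. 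The one genuine gap is the one you flag yourself: the bound $e^{LT}\alpha(r)\lesssim r^{1+\kappa}$ is empty until you control $\esssup_t\lip(u_t)$ for the stabilizing control, and this cannot be read off Theorem~\ref{thm:main} as stated (which only asserts \emph{existence} of an admissible $u$). It must be extracted from the explicit construction behind Theorem~\ref{thm:main2} in \cite{PR2024}, where the Lipschitz constant of the control is estimated in terms of $L$, $T$, $M$, $\omega$, $\spt(\varrho_0)$; the factor $-1/\log r$ in $\alpha(r)$ is precisely the margin designed to absorb the resulting exponential. So your proposal is a correct reduction of Theorem~\ref{thm:approx} to a quantitative property of the construction, not yet a self-contained proof. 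A minor additional slip: for $\sigma=\varrho_\varepsilon$ on a geodesic toward $\varrho_1\in\mathcal{P}_{ac}(\Omega)$ one has $\mathcal{W}_2(\sigma,\varrho_0)=\varepsilon\,\mathcal{W}_2(\varrho_0,\varrho_1)\le\varepsilon\,\diam(\Omega)$, not $\le\varepsilon$; this is harmless, but it means the constant $C_1$ also inherits a factor $\diam(\Omega)^{1+\kappa}$, consistent with the dependence on $\Omega$ already present in Theorem~\ref{thm:main}.
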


The interest of our contribution is three-fold: first, it finds the most natural definition of stabilization in this setting, that is the reduction of the distance rate. Second, and most importantly, it provides a completely geometric description of the stabilization result. In particular, the fact that we use geodesics of the Wasserstein space corresponds to the use of the ``main directions'' of the tangent space itself, as explained above. Third, even though geodesic directions are few in the tangent space, Theorem \ref{thm:approx} allows us to extend the result to an neighborhood of the initial state, making the stabilization property effective in applications.

\subsection*{Notation}

For convenience, we collect here basic notations used in the paper. Below, \( \mathcal{X} \) and \( \mathcal{Y} \) are arbitrary metric spaces.

\begin{supertabular}{ll}
  % \( \langle x,y\rangle\) &\(\sum_{i=1}^dx_iy_i \), the Euclidean inner product on \( \mathbb{R}^{d} \)\\
  \( |x|\) & the Euclidean norm on \( \mathbb{R}^{d} \)\\
  \(\id\) & the identity map \(\id\colon \mathbb{R}^d\to \mathbb{R}^d\);\\
  \( C(\mathcal{X};\mathcal{Y}) \) & the space of continuous maps \( f\colon \mathcal{X}\to \mathcal{Y} \) \\
  % \( \Lip(\mathcal{X};\mathcal{Y}) \) & the space of bounded Lipschitz \( f\colon \mathcal{X}\to \mathcal{Y} \)\\
	\( \|f\|_{\infty}\)&\(\sup_{x \in \mathbb{R}^{d} }|f(x)|  \), the norm on \( C(\mathbb{R}^d;\mathbb{R}^d) \)\\
  \( \lip (f) \) & \( \sup_{x \ne y }\frac{|f(x)-f(y)|}{|x-y|}  \), for \( f\in C(\mathbb{R}^d;\mathbb{R}^d) \)\\
  \( \partial f \) & subdifferential of a convex map \( f\colon \mathbb{R}^d\to \mathbb{R} \)\\
  \( \partial A \) & boundary of a set \( A \subset \mathbb{R}^d \)\\
  \(\co (A)\) & the convex hull of \(A\subset \mathbb R^d\)\\
  \( {B}_{r}(A)\) &\(\left\{x\in \mathbb{R}^d\,\colon\, \inf_{y\in A}|x-y|< r\right\}\), for \( A \subset \mathbb{R}^d\)\\
  \( \overline{B}_{r}(A)\) &\(\left\{x\in \mathbb{R}^d\,\colon\, \inf_{y\in A}|x-y|\le r\right\} \), for \( A \subset \mathbb{R}^d \)\\
  \(\mathcal L^d\) & the Lebesgue measure on \(\mathbb R^d\)\\
  \( \spt (\mu) \) & the support of the probability measure \( \mu \)\\
  \( \sharp \) & the pushforward operator (see Section~\ref{sec:wass})\\
   \(\mathcal{P}(\mathcal{X}) \) & the space of probability measures on \( \mathcal{X} \)\\
  \(\mathcal{P}_{2}(\mathbb{R}^{d}) \) & \( \left\{\mu\in \mathcal{P}(\mathbb{R}^d)\,\colon\,\int |x|^{2}\,d\mu<\infty\right\} \)\\
  \( \mathcal{P}_{c}(\mathbb{R}^{d}) \) & \( \left\{\mu\in \mathcal{P}(\mathbb{R}^d)\,\colon\, \spt(\mu)\text{ is compact}\right\} \)\\
  \( \mathcal{W}_2 \) & the 2-Wasserstein distance on \( \mathcal{P}_{2}(\mathbb{R}^{d}) \)\\
  % \( \mathcal{T}_{a,b} \) & the intermediate transport map (see Section~\ref{sec:wass})\\
\end{supertabular}

% The paper is organized as follows\ldots

\section{Wasserstein space and its geodesics}
\label{sec:wass}

This section provides basic information about the Wasserstein space and discuss some properties of the intermediate optimal transport maps, related to its geodesics.

\subsection{Basic definitions and properties}
\label{subsec:prob}
Here we collect several standard results about the \emph{Wasserstein space} \( \mathcal{P}_{2}(\mathbb{R}^{d}) \), which is the space of probability measures \(\mu\) with finite second moments, i.e., such that \(\int|x|^2\,d\mu(x)<\infty\).
For a detailed treatment, see  \cite{AGS05,santambrogioOptimalTransportApplied2015,villaniTopicsOptimalTransportation2003,zbMATH05306371}.

We begin by reviewing the standard concepts of pushforward measure and transport plan.

\begin{definition}
  Let \(\mu\) be a Borel probability measure on \(\mathbb R^m\) and  \( f\colon \mathbb{R}^m\to \mathbb{R}^n \) be a Borel map.
  The probability measure \(f_\sharp\mu\) on \(\mathbb R^n\) constructed by the rule
\begin{displaymath}
  (f_{\sharp}\mu)(A):= \mu\left(f^{-1}(A)\right),\quad \text{for all Borel sets }A\subset Y,
\end{displaymath}
is called \emph{the pushforward} of \(\mu\) by \(f\).
\end{definition}

\begin{definition}
Let \(\mu\) and  \(\nu\) be any Borel probability measures on \(\mathbb R^d\).
A \emph{transport plan} \( \bm \gamma \) between \(\mu\) and \(\nu\) is a probability measure on \( \mathbb{R}^{d}\times \mathbb{R}^{d} \) whose projections on the first and the second factor are
\( \mu \) and \( \nu \), respectively. In other words, %such \( \pi \) satisfy
\(
  \pi^{1}_{\sharp}\bm\gamma = \mu,\quad \pi^{2}_{\sharp}\bm\gamma=\nu,
\)
where \( \pi^{1}, \pi^{2} \) are the projections on the factors.
The set of all transport plans  between \( \mu \) and \( \nu \) is denoted by  \( \Gamma(\mu,\nu) \).
\end{definition}

\begin{definition}
  A transport plan \( \bm\gamma\in \Gamma(\mu,\nu) \) is said to be generated by a \emph{transport map} \( \mathcal{T}\colon \mathbb{R}^d\to \mathbb{R}^d \) if it takes the form \( \bm\gamma = (\id,\mathcal{T})_{\sharp}\mu \).
  It is evident that in this case \( \mathcal{T}_{\sharp}\mu=\nu \).
\end{definition}
\begin{definition}
  Given a pair of measures \( \mu,\nu\in \mathcal{P}_2(\mathbb{R}^d) \), the corresponding \emph{optimal transportation problem} is the minimization problem
  \(
  \displaystyle \inf_{\bm\gamma\in\Gamma(\mu,\nu)}\int|x-y|^{2}\,d \bm\gamma(x,y).
  \)
  Any transport plan \( \bm\gamma\in \Gamma(\mu,\nu) \) that solves it is called \emph{optimal}.
 \end{definition}

 Optimal plans always exist~\cite[Thm. 1.3]{villaniTopicsOptimalTransportation2003} and admit a simple characterization given below. % provided by the following theorem.
 \begin{theorem}[{\!\!\cite[Thm. 2.12]{villaniTopicsOptimalTransportation2003}}]
   \!\!\mbox{If\! \( \mu,\nu\!\in\! \mathcal{P}_2(\mathbb{R}^d) \), then}
   \begin{enumerate}
     \item \( \bm\gamma\in \Gamma(\mu,\nu) \) is optimal if and only if the support \( \spt (\bm\gamma) \) belongs to the subdifferential \( \partial \phi \) of a convex lower semicontinuous function \( \phi\colon \mathbb{R}^d\to \mathbb{R}\cup\{+\infty\} \);

     \item if \( \mu \) is absolutely continuous, then there exists a unique optimal plan \( \bm\gamma\in \Gamma(\mu,\nu) \), which is \( \bm\gamma=(\id,\nabla\phi)_{\sharp}\mu \), where \( \nabla\phi \) is the unique (i.e., uniquely determined \( \mu \)-almost everywhere) gradient of a convex function \( \phi \).
   \end{enumerate}
 \end{theorem}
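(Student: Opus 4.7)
My plan is to establish this characterization via the classical chain: optimal plans are cyclically monotone, cyclically monotone sets lie in the subdifferential graph of a convex function (Rockafellar), and absolute continuity of the source measure makes the subdifferential single-valued almost everywhere.

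First I would reduce the quadratic cost to an inner-product functional. Since $\int|x|^{2}\,d\mu$ and $\int|y|^{2}\,d\nu$ are fixed on $\Gamma(\mu,\nu)$, the identity $|x-y|^{2}=|x|^{2}+|y|^{2}-2x\cdot y$ shows that minimizing $\int|x-y|^{2}\,d\bm\gamma$ is equivalent to maximizing $\int x\cdot y\,d\bm\gamma$. Then I would prove the necessary direction of (1) by contradiction: if $\spt(\bm\gamma)$ fails to be cyclically monotone, there exist points $(x_{1},y_{1}),\ldots,(x_{N},y_{N})\in\spt(\bm\gamma)$ and a permutation $\sigma$ with $\sum_{i} x_{i}\cdot y_{\sigma(i)}>\sum_{i} x_{i}\cdot y_{i}$. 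Around each $(x_{i},y_{i})$ I would carve out small disjoint neighborhoods $U_{i}\times V_{i}$ of positive $\bm\gamma$-mass and construct a local perturbation of $\bm\gamma$ that removes a small piece of $\bm\gamma$ concentrated on the union $\bigcup_{i}U_{i}\times V_{i}$ and reinstalls an equal-marginal plan pairing $U_{i}$ with $V_{\sigma(i)}$. For sufficiently small neighborhoods the perturbed plan strictly increases $\int x\cdot y\,d\bm\gamma$, contradicting optimality. Rockafellar's construction then supplies a convex lower-semicontinuous $\phi$ with $\spt(\bm\gamma)\subset\partial\phi$ explicitly via
\[
\phi(x):=\sup\Bigl\{\,y_{N}\cdot(x-x_{N})+\sum_{i=0}^{N-1}y_{i}\cdot(x_{i+1}-x_{i})\,\Bigr\},
\]
where the supremum is taken over finite chains $(x_{1},y_{1}),\dots,(x_{N},y_{N})\in\spt(\bm\gamma)$ anchored at a fixed base point $(x_{0},y_{0})$; cyclical monotonicity ensures $\phi(x_{0})=0$, hence $\phi$ is proper and convex as a supremum of affine functions.

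For the sufficient direction of (1), assume $\spt(\bm\gamma)\subset\partial\phi$ for some convex lsc $\phi$. The Fenchel--Young inequality $x\cdot y\le\phi(x)+\phi^{*}(y)$ holds everywhere, with equality exactly on the graph of $\partial\phi$. Integrating against any competitor $\bm\gamma'\in\Gamma(\mu,\nu)$ and using the marginals yields $\int x\cdot y\,d\bm\gamma'\le\int\phi\,d\mu+\int\phi^{*}\,d\nu$, while the equality case gives $\int x\cdot y\,d\bm\gamma=\int\phi\,d\mu+\int\phi^{*}\,d\nu$, proving optimality of $\bm\gamma$ (a truncation argument handles integrability of $\phi,\phi^{*}$). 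For part (2), when $\mu$ is absolutely continuous, the classical fact that a finite convex function is differentiable at Lebesgue-almost every interior point of its domain implies $\partial\phi(x)=\{\nabla\phi(x)\}$ for $\mu$-a.e.\ $x$. Hence any optimal plan is concentrated on the graph of $\nabla\phi$ and so equals $(\id,\nabla\phi)_{\sharp}\mu$. Uniqueness of the optimal plan itself follows by the standard convexity trick: given two optimal plans $\bm\gamma_{1},\bm\gamma_{2}$, the midpoint $(\bm\gamma_{1}+\bm\gamma_{2})/2$ is also optimal, hence supported in some $\partial\psi$; single-valuedness $\mu$-a.e.\ of $\partial\psi$ forces both $\bm\gamma_{i}$ to be concentrated on the same graph, giving $\bm\gamma_{1}=\bm\gamma_{2}$.

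The hardest step is Rockafellar's theorem itself: verifying that the explicit chain supremum above is proper, lower semicontinuous, and that $\spt(\bm\gamma)$ actually lies in its subdifferential graph requires careful bookkeeping of telescoping inner-product chains. A secondary but nontrivial technical nuisance is the integrability of $\phi$ and $\phi^{*}$ against $\mu$ and $\nu$ in the Fenchel--Young step; for $\mu,\nu\in\mathcal{P}_{2}(\mathbb{R}^{d})$ one gets at least one-sided bounds sufficient for the optimality comparison, but one must spell out the truncation to avoid indeterminate $\infty-\infty$ expressions.
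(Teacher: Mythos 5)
The paper does not prove this statement; it is imported verbatim as background from Villani's book (Thm.\ 2.12 there), whose proof is exactly the chain you describe: reduction to maximizing \( \int x\cdot y\,d\bm\gamma \), cyclical monotonicity of optimal supports, Rockafellar's construction of a convex potential, Fenchel--Young for sufficiency, and a.e.\ differentiability of convex functions plus the midpoint trick for part (2). Your proposal is correct and follows that standard route, with the genuinely delicate points (the integrability/truncation issue in the Fenchel--Young comparison and the bookkeeping in Rockafellar's chain supremum) properly flagged.
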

 \vspace{3pt}

One can use optimal transportation to define the so called Wasserstein distance on \( \mathcal{P}_{2}(\mathbb{R}^{d}) \), turning it into a \emph{complete separable metric space}~\cite[Thm.~6.18]{zbMATH05306371}.

\begin{definition} The \emph{Wasserstein distance} is
\begin{equation}
  \label{eq:wasserstein}
  \mathcal{W}_2(\mu,\nu) := \left(\inf_{\Pi\in\Gamma(\mu,\nu)}\int|x-y|^{2}\,d \Pi(x,y)\right)^{1/2}.
\end{equation}
\end{definition}
\vspace{5pt}

Before proceeding further, we recall useful definitions from metric geometry~\cite{zbMATH01626771}.
\begin{definition}
  Let \( \gamma\colon [a,b]\to \mathcal X \) be a continuous curve on a metric space \((\mathcal{X},d)\).
  The \emph{length} of \( \varrho \) is defined by
\begin{displaymath}
  L(\gamma) := \sup \sum_{i=1}^{N}d(\gamma_{t_{i-1}},\gamma_{t_{i}}),
\end{displaymath}
where the supremum is taken among all finite partitions \( a=t_{0}\leq t_{1}\leq \cdots\leq
t_{N}=b \) of the interval \( [a,b] \).
\end{definition}
\vspace{3pt}

\begin{definition}
  A \emph{length space} is a metric space \( (\mathcal{X},d) \) such that
  \[
    d(x,y) = \inf_{\gamma \in C([0,1]; \mathcal{X}) }\left\{ L(\gamma)\;\colon\; \gamma_{0}=x,\;\gamma_{1}=y \right\}.
  \]
  If the infimum above is always attained, then \( \mathcal{X} \) is said to be a \emph{strictly intrinsic length space}, or a \emph{geodesic space}.
\end{definition}
\vspace{3pt}
\begin{definition}
  Let \( (\mathcal{X},d) \) be a geodesic space.
  A curve \( \gamma\in C([0,1];\mathcal{X}) \) joining \( x, y\in \mathcal{X} \) with \( d(x,y)=L(\gamma) \) is called a \emph{minimal geodesic} between \( x \) and \( y \).
\end{definition}
\vspace{3pt}

It is known~\cite[Sec. 2.5]{zbMATH01626771} that any minimal geodesic admits a constant speed parametrization, which satisfies
\begin{displaymath}
  d(\gamma_{t},\gamma_{s}) = |t-s|d(x,y), \quad s,t\in [0,1].
\end{displaymath}
In what follows, by saying that \( \gamma_t \) is a \emph{geodesic} joining \( x \) and \(y\) we mean that \( \gamma_t \) is a minimal geodesic from \( x\) to \( y \) parametrized in this way.

Finally, we can characterize the Wasserstein space \( (\mathcal{P}_2(\mathbb{R}^d),\mathcal{W}_2) \) as a geodesic space.

\begin{theorem}[{\cite[Cor. 2.9]{gigliGeometrySpaceProbability2004}}]
  The Wasserstein space \( (\mathcal{P}_2(\mathbb{R}^d),\mathcal{W}_2) \) is a geodesic space.
Any geodesic \( \varrho_t \) joining \( \varrho_0, \varrho_1 \in \mathcal{P}_2(\mathbb{R}^d)\) takes the form
\( \rho_{t} = \left((1-t)\pi^{1}+t \pi^{2}\right)_{\sharp}\bm\gamma \), where \( \bm\gamma \) is an optimal transport plan between
\( \varrho_0 \) and \( \varrho_1 \).
In particular, if the optimal plan is unique, the geodesic is  unique as well.
\end{theorem}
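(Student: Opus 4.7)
My plan is to establish the two assertions in tandem: (a) constructing an explicit geodesic from any optimal plan (which yields both the geodesic-space property and one direction of the characterization), and (b) showing every constant-speed geodesic arises this way.

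For (a), I would fix $\varrho_0, \varrho_1 \in \mathcal{P}_2(\mathbb{R}^d)$, take an optimal plan $\bm\gamma \in \Gamma(\varrho_0,\varrho_1)$ (existence was quoted above), and define $F_t(x,y) := (1-t)x + ty$ together with $\rho_t := (F_t)_\sharp \bm\gamma$. The endpoint identities $\rho_0 = \varrho_0$, $\rho_1 = \varrho_1$ follow from the marginals of $\bm\gamma$. For $0 \le s \le t \le 1$, the push-forward $(F_s, F_t)_\sharp \bm\gamma$ lies in $\Gamma(\rho_s, \rho_t)$, so
\begin{equation*}
\mathcal{W}_2^2(\rho_s, \rho_t) \le \int |F_s(x,y) - F_t(x,y)|^2 \, d\bm\gamma(x,y) = (t-s)^2 \, \mathcal{W}_2^2(\varrho_0,\varrho_1).
\end{equation*}
Combining this with the triangle inequality $\mathcal{W}_2(\varrho_0,\varrho_1) \le \mathcal{W}_2(\rho_0,\rho_s) + \mathcal{W}_2(\rho_s,\rho_t) + \mathcal{W}_2(\rho_t,\rho_1)$ forces equality throughout, so $\rho_t$ is a constant-speed geodesic of length $\mathcal{W}_2(\varrho_0,\varrho_1)$.

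For (b), I would let $\rho_t$ be an arbitrary constant-speed geodesic from $\varrho_0$ to $\varrho_1$, fix $t \in (0,1)$, and choose optimal plans $\bm\alpha \in \Gamma(\varrho_0, \rho_t)$, $\bm\beta \in \Gamma(\rho_t, \varrho_1)$. The standard gluing construction yields $\bm\sigma \in \mathcal{P}(\mathbb{R}^{3d})$ with $\pi^{12}_\sharp \bm\sigma = \bm\alpha$ and $\pi^{23}_\sharp \bm\sigma = \bm\beta$. Setting $\bm\gamma := \pi^{13}_\sharp \bm\sigma \in \Gamma(\varrho_0, \varrho_1)$ and applying the vector-valued Minkowski inequality in $L^2(\bm\sigma;\mathbb{R}^d)$ to $(x,y,z)\mapsto y-x$ and $(x,y,z)\mapsto z-y$,
\begin{equation*}
\mathcal{W}_2(\varrho_0, \varrho_1) \le \left(\int |x-z|^2 \, d\bm\sigma\right)^{1/2} \le \mathcal{W}_2(\varrho_0, \rho_t) + \mathcal{W}_2(\rho_t, \varrho_1) = \mathcal{W}_2(\varrho_0, \varrho_1),
\end{equation*}
where the first step uses $\bm\gamma \in \Gamma(\varrho_0,\varrho_1)$, the middle step is Minkowski together with the optimality of $\bm\alpha,\bm\beta$, and the last equality is constant speed. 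Consequently $\bm\gamma$ is optimal and Minkowski holds with equality.

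The main subtlety, which I expect to be the hardest step, is extracting from the equality case of Minkowski that the middle component of $\bm\sigma$ is $\bm\sigma$-a.e.\ the affine combination $F_t$ of the outer components. Vector Minkowski forces $y - x$ and $z - y$ to be $\bm\sigma$-a.e.\ positively proportional with a \emph{constant} scalar ratio $c \ge 0$ (one needs both the pointwise Cauchy--Schwarz equality and the scalar-Minkowski equality in the norms $|y-x|$ and $|z-y|$); matching the $L^2$ norms via $\mathcal{W}_2(\varrho_0,\rho_t) = t\,\mathcal{W}_2(\varrho_0,\varrho_1)$ and $\mathcal{W}_2(\rho_t,\varrho_1) = (1-t)\,\mathcal{W}_2(\varrho_0,\varrho_1)$ then pins down $c = t/(1-t)$, whence $y = (1-t)x + tz$ $\bm\sigma$-a.e. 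Therefore $\rho_t = \pi^2_\sharp \bm\sigma = (F_t)_\sharp \bm\gamma$ for every $t \in (0,1)$ (and trivially at the endpoints), as claimed; the uniqueness clause is then immediate since a unique optimal plan forces a unique geodesic through this formula.
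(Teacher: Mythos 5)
First, a point of comparison: the paper does not prove this statement at all --- it is quoted from Gigli~\cite{gigliGeometrySpaceProbability2004} as a known fact, so there is no internal proof to measure you against. On its own merits, your argument is the standard textbook proof (essentially Ambrosio--Gigli--Savar\'e, Thm.~7.2.2). Part (a) --- pushing an optimal plan forward by \( F_t=(1-t)\pi^{1}+t\pi^{2} \) and squeezing with the triangle inequality --- correctly yields both the geodesic-space property and one direction of the characterization. Part (b) --- gluing, Minkowski in \( L^2(\bm\sigma;\mathbb{R}^d) \), and the equality case forcing \( y=(1-t)x+tz \) \( \bm\sigma \)-a.e.\ --- is also sound, including the identification of the constant ratio \( c=t/(1-t) \) from the constant-speed property (the degenerate case \( \varrho_0=\varrho_1 \) being trivial).

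There is, however, one genuine gap in part (b). Everything you construct --- \( \bm\alpha \), \( \bm\beta \), \( \bm\sigma \), and hence \( \bm\gamma=\pi^{13}_{\sharp}\bm\sigma \) --- is chosen for one fixed \( t \), so what you actually prove is \( \rho_t=(F_t)_{\sharp}\bm\gamma_t \) with a plan \( \bm\gamma_t \) that may a priori vary with \( t \). The theorem asserts a \emph{single} optimal plan representing the whole curve, and your final sentence silently promotes the per-\( t \) statement to the uniform one. When the optimal plan between \( \varrho_0 \) and \( \varrho_1 \) is unique this is harmless (all \( \bm\gamma_t \) coincide, which is why your uniqueness clause survives), but in the non-unique case an extra argument is required. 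Two standard repairs: iterate the construction over dyadic times (each subarc of a geodesic is again a geodesic) and pass to the limit using continuity of \( t\mapsto\rho_t \) and tightness of the plans; or invoke the uniqueness of the optimal plan between an intermediate point \( \rho_t \) and an endpoint (cf.\ Lemma~\ref{lem:intermediate}, quoting \cite[Prop.~2.11]{gigliGeometrySpaceProbability2004}), which forces the plan produced at any \( s<t \) to be compatible with the one produced at \( t \) and hence yields one plan for all times. Either way, this step should be made explicit rather than asserted.
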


% When \( \varrho_0 \) is absolutely continuous, the optimal plan \( \Pi\in\Gamma(\varrho_0,\varrho_1) \) is always unique and is realized by a transport map \( \mathcal{T}\colon \mathbb{R}^{d}\to \mathbb{R}^{d} \), i.e.,
% \begin{displaymath}
%  \mathcal{T}_{\sharp}\varrho_0 = \varrho_1,\quad \mathcal{W}_{2}^{2}(\varrho_0,\varrho_1) = \int |x-\mathcal{T}(x)|^{2}\,d\varrho_0(x).
% \end{displaymath}
%  In this case, \( \mathcal{T} \) coincides almost everywhere with the gradient \( D\phi \) of a convex function \( \phi \), the corresponding transport plan is given by \( \Pi = (\id,\mathcal{T})_{\sharp}\varrho_0 \) and the geodesic takes the form \( \varrho_{t}=\left((1-t)\id + t\mathcal{T}\right)_{\sharp}\varrho_0 \).

\subsection{Prolongation of geodesics}

Consider the starting set \( S^{\Pi}_r(\varrho_0) \), where \( \Pi \) is defined by~\eqref{eq:Pi} and \( r\in(0,1) \).
Observe that any point \( \mu\in S^{\Pi}_r(\varrho_0) \), except \( \varrho_0 \), is an \emph{intermediate point} of some geodesic \( \gamma_{\varrho_0,\varrho_1} \), where \( \varrho_1\in \mathcal{P}_{ac}(\Omega) \).
Equivalently, we may say that \( \mu \) is an intermediate point of a geodesic starting at \( \varrho_0 \) if the geodesic \( \gamma_{\varrho_0,\mu} \) can be prolonged.
In this section, we give a precise notion of prolongation and characterize intermediate points.

\begin{definition}
Let \( \varrho\colon [0,1]\to \mathcal{P}_2(\mathbb{R}^d) \) be a geodesic and \( a,b\in [0,1] \). Assume that the optimal transport plan between \( \varrho_a \) and \( \varrho_b \) comes from a transport map: we denote the map by \( \mathcal{T}_{a,b} \).
\end{definition}

The following result is a corollary of~\cite[Prop. 2.11]{gigliGeometrySpaceProbability2004}.

\begin{lemma}
  \label{lem:intermediate}
  Let \( \varrho\colon [0,1]\to \mathcal{P}_2(\mathbb{R}^d) \) be a geodesic and \( a,b\in (0,1)  \) be such that \( a<b \).
  Then the optimal transport plans between \( \varrho_a \) and \( \varrho_b \) and between \( \varrho_b \) and \( \varrho_a \) are unique. %\FR{This is a consequence of symmetry of transport plans (not Monge maps)}
  These plans are realized by transport maps \( \mathcal{T}_{a,b} \) and \( \mathcal{T}_{b,a} \).
  Moreover, the maps \( \mathcal{T}_{a,b} \), \( \mathcal{T}_{b,a} \) are Lipschitz continuous, with
  \[
    \lip(\mathcal{T}_{b,a})\le \frac{1-a}{1-b},\quad
    \lip(\mathcal{T}_{a,b})\le \frac{b}{a}.
  \]
\end{lemma}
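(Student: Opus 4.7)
The plan is to unpack the geodesic $\varrho_t$ as $\varrho_t = e_{t\sharp}\bm\gamma$ where $e_t(x,y)=(1-t)x+ty$ and $\bm\gamma$ is the (or an) optimal plan between $\varrho_0$ and $\varrho_1$, and then to construct $\mathcal{T}_{a,b}$ and $\mathcal{T}_{b,a}$ by inverting the evaluation maps $e_a$ and $e_b$ on $\spt(\bm\gamma)$. The single ingredient doing all the work is the 2-cyclical monotonicity of $\spt(\bm\gamma)$ guaranteed by Theorem 2.12(1): for any pair $(x_1,y_1),(x_2,y_2)\in\spt(\bm\gamma)$, with $u:=x_1-x_2$ and $v:=y_1-y_2$, one has $\langle u,v\rangle\ge 0$.

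First I would verify that $e_a|_{\spt(\bm\gamma)}$ is injective when $a\in(0,1)$. If $e_a(x_1,y_1)=e_a(x_2,y_2)$, then $(1-a)u+av=0$, so taking the inner product with $v$ yields $-a|v|^2=(1-a)\langle u,v\rangle\ge 0$, forcing $v=0$ and then $u=0$. The same reasoning works for $b$. This lets me define
\[
  \mathcal{T}_{a,b}:=e_b\circ (e_a|_{\spt(\bm\gamma)})^{-1},\qquad \mathcal{T}_{b,a}:=e_a\circ (e_b|_{\spt(\bm\gamma)})^{-1},
\]
on $\spt(\varrho_a)$ and $\spt(\varrho_b)$ respectively. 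The plan $(e_a,e_b)_\sharp\bm\gamma=(\id,\mathcal{T}_{a,b})_\sharp\varrho_a$ is then optimal between $\varrho_a$ and $\varrho_b$, because the geodesic has constant speed so $\mathcal{W}_2(\varrho_a,\varrho_b)=(b-a)\mathcal{W}_2(\varrho_0,\varrho_1)$, and a direct computation gives $\int |e_a-e_b|^2\,d\bm\gamma=(b-a)^2\,\mathcal{W}_2^2(\varrho_0,\varrho_1)$, matching the infimum.

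Next, the Lipschitz estimates are a one-line algebra exercise. With $z_i=e_a(x_i,y_i)$ I expand
\[
  |z_1-z_2|^2=(1-a)^2|u|^2+2a(1-a)\langle u,v\rangle+a^2|v|^2,
\]
\[
  |\mathcal{T}_{a,b}(z_1)-\mathcal{T}_{a,b}(z_2)|^2=(1-b)^2|u|^2+2b(1-b)\langle u,v\rangle+b^2|v|^2,
\]
and the difference $b^2|z_1-z_2|^2-a^2|\mathcal{T}_{a,b}(z_1)-\mathcal{T}_{a,b}(z_2)|^2$ factors neatly as
\[
  (b-a)\bigl\{(a+b-2ab)|u|^2+2ab\langle u,v\rangle\bigr\},
\]
which is non-negative because $b>a$, $a+b-2ab=a(1-b)+b(1-a)>0$, and $\langle u,v\rangle\ge 0$. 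So $\lip(\mathcal{T}_{a,b})\le b/a$ on $\spt(\varrho_a)$, and a Kirszbraun extension produces a Lipschitz map on $\mathbb{R}^d$ with the same constant. The symmetric computation, subtracting $(1-b)^2|\mathcal{T}_{b,a}(w_1)-\mathcal{T}_{b,a}(w_2)|^2$ from $(1-a)^2|w_1-w_2|^2$, factors as $(b-a)\{2(1-a)(1-b)\langle u,v\rangle+(a+b-2ab)|v|^2\}\ge 0$, giving $\lip(\mathcal{T}_{b,a})\le (1-a)/(1-b)$.

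The one genuinely subtle point, which I expect to be the main obstacle, is the \emph{uniqueness} of the optimal plan between $\varrho_a$ and $\varrho_b$ (and between $\varrho_b$ and $\varrho_a$). The classical uniqueness from Theorem 2.12(2) does not apply, because neither $\varrho_a$ nor $\varrho_b$ is assumed absolutely continuous; this is exactly where I would invoke Gigli's Proposition~2.11 (as the statement advertises), which tells us that an intermediate point of a geodesic enjoys the same ``forward/backward uniqueness'' property as an absolutely continuous measure along the geodesic in question. Once uniqueness is granted, the plan $(e_a,e_b)_\sharp\bm\gamma$ constructed above is \emph{the} optimal plan, and is induced by the map $\mathcal{T}_{a,b}$ (and symmetrically by $\mathcal{T}_{b,a}$), completing the proof.
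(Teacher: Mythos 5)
Your proof is essentially correct, but it takes a genuinely different route from the paper's. The paper proves the whole lemma in four lines by reducing it to the quoted special case of Gigli's Proposition~2.11 (uniqueness, existence of maps, and the bounds $\lip(\mathcal{T}_{s,0})\le \tfrac{1}{1-s}$, $\lip(\mathcal{T}_{s,1})\le \tfrac{1}{s}$ for an intermediate point versus an \emph{endpoint}): it reparametrizes the geodesic as $\mu_t:=\varrho_{a+(1-a)t}$, so that $\varrho_a$ becomes the endpoint $\mu_0$ and $\varrho_b$ the intermediate point $\mu_{(b-a)/(1-a)}$, which yields $\lip(\mathcal{T}_{b,a})\le \tfrac{1-a}{1-b}$, and symmetrically with $\mu_t:=\varrho_{tb}$ for the other bound. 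You instead rebuild the maps from scratch out of the monotonicity of $\spt(\bm\gamma)$ and verify the Lipschitz constants by direct algebra; your factorizations $(b-a)\{(a+b-2ab)|u|^2+2ab\langle u,v\rangle\}$ and $(b-a)\{2(1-a)(1-b)\langle u,v\rangle+(a+b-2ab)|v|^2\}$ are correct, as is the optimality check via constant speed. What your approach buys is transparency: it shows exactly where the constants come from and is self-contained except for uniqueness; what the paper's approach buys is brevity and the fact that a single external result carries all three conclusions at once. One small point to tighten: Proposition~2.11 as used in the paper gives uniqueness only for plans between an intermediate point and an \emph{endpoint}, so to get uniqueness between the two interior points $\varrho_a$ and $\varrho_b$ you still need the same reparametrization trick the paper uses (restrict the geodesic so that $\varrho_a$ becomes an endpoint); your appeal to a ``forward/backward uniqueness property'' is the right idea but should be made explicit in this way.
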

\begin{proof}
  Let \( \mu\colon [0,1]\to \mathcal{P}_2(\mathbb{R}^d) \) be an arbitrary geodesic.
  According to~\cite[Prop. 2.11]{gigliGeometrySpaceProbability2004}, for any \( s\in (0,1)\), optimal plans between \( \mu_s \), \( \mu_0 \) and between \( \mu_s \), \( \mu_{1} \) are unique and come from maps \( \mathcal{T}_{s,0} \) and \( \mathcal{T}_{s,1} \) such that
  \begin{equation}
    \label{eq:gigli}
    \lip(\mathcal{T}_{s,0})\le \frac{1}{1-s},\quad \lip(\mathcal{T}_{s,1})\le \frac{1}{s}.
\end{equation}
  % \noindent\emph{Step 1: For any \( t\in (0,1) \), one has \( \lip(\mathcal{T}_{t,0})\le \frac{1}{1-t} \) and \( \lip(\mathcal{T}_{t,1})\le \frac{1}{t} \).}
  % \vspace{5pt}
  % \noindent\emph{Step 1: For any \( t\in (0,1) \), one has \( \lip(\mathcal{T}_{t,0})\le \frac{1}{1-t} \) and \( \lip(\mathcal{T}_{t,1})\le \frac{1}{t} \).}
  % \vspace{5pt}

  % Let \( \Pi \) be an optimal transport plan between \( \varrho_0 \) and \( \varrho_1 \). Then
  % \[
  %   \Pi_{0,t} = (\pi^{1},(1-t)\pi^1+t\pi^{2})_{\sharp}\Pi
  %         \]
  %   is a unique optimal transport plan between \( \varrho_0 \) and \( \varrho_t \).
  %   Recall that \( \spt \Pi \subset \partial \phi \), where \( \phi \) is a convex function.
  %   As a result, \( \spt \Pi  \) belongs to the graph of the set-valued map \( (1-t)\id + t\partial \phi \).
  %   The map \( \mathcal{T}_{t,0} = ((1-t)\id + t\partial \phi)^{-1} \) is Lipschitz of modulus \( 1/(1-t) \), due to the monotonicity of \( \partial\phi \)~\cite[Proposition 12.54]{rockafellarVariationalAnalysis1998}.
  %   By taking \( \mu_t:=\varrho_{1-t} \), we get the second inequality.

  %   \vspace{5pt}
  %   \noindent\emph{Step 2: For any \( a,b\in (0,1) \) such that \( a<b \), one has \( \lip(\mathcal{T}_{b,a})\le \frac{1-a}{1-b} \) and \( \lip(\mathcal{T}_{a,b})\le \frac{b}{a} \).}
  %   \vspace{5pt}

  Let \( \mu_t := \varrho_{a+(1-a)t} \), \( t\in [0,1] \), so that
    \[
    \mu_0 = \varrho_a,\quad \mu_{\frac{b-a}{1-a}} = \varrho_{b},\quad \mu_1 = \varrho_{1}.
    \]
    The curve \( \mu_t \) is clearly a geodesic.
    By applying~\eqref{eq:gigli} for \( \mu_t \) and \( s =\frac{b-a}{1-a}\), we obtain \( \lip(\mathcal{T}_{b,a})\le \frac{1-a}{1-b} \).
    To deal with the reverse map, take \( \mu_{t} := \varrho_{tb} \); then
    \[
    \mu_0 = \varrho_0,\quad \mu_{\frac{a}{b}} = \varrho_{a},\quad \mu_1 = \varrho_{b},
    \]
    and we get the second inequality.
\end{proof}

% Let us mention an interesting consequence of the previous theorem.
\begin{definition}
  Let \( \varrho\colon [0,1]\to \mathcal{P}_2(\mathbb{R}^d) \) be a geodesic.
  We say that \( \varrho \) can be \emph{prolonged forward} if there exists another geodesic \( \mu\colon [0,1]\to \mathcal{P}_2(\mathbb{R}^d) \)
  % such that \( \varrho_0=\mu_0 \) and \( \mathrm{Im}\, \varrho  \) is a strict subset of \( \mathrm{Im}\,\mu \).
  and a constant \( a\in(0,1) \) such that \( \varrho_t = \mu_{at} \) for all \( t\in [0,1] \).
Similarly, \(\varrho\) can be \emph{prolonged backward} if \(\varrho_{1-t}\) can be prolong forward.
\end{definition}
\vspace{5pt}

\begin{theorem}[Prolongation of geodesics]
  \label{thm:prolong}
  A geodesic \( \varrho\) can be prolonged forward if and only if the optimal transport plan between \( \varrho_1 \) and \( \varrho_s \) comes from a Lipschitz transport map \( \mathcal{T}_{1,s}\), for each \( s\in(0,1) \). Similarly for the backward propagation.
  %Similarly, \( \varrho \) can be prolonged backward if and only if the optimal transport plan between \( \varrho_0 \) and \( \varrho_s \) comes from a Lipschitz transport map \( \mathcal{T}_{0,s}\), for each \( s\in(0,1) \).
\end{theorem}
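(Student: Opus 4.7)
The forward implication follows directly from Lemma~\ref{lem:intermediate}. If $\varrho$ is prolonged forward to a geodesic $\mu$ on $[0,1]$ with $\varrho_t = \mu_{at}$ for some $a \in (0,1)$, then for every $s \in (0,1)$ both $\varrho_1 = \mu_a$ and $\varrho_s = \mu_{as}$ are interior points of $\mu$ (their parameters lie in $(0,1)$), so the lemma supplies a Lipschitz transport map between them, which is exactly $\mathcal{T}_{1,s}$.

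For the converse, I represent the geodesic via McCann's theorem as $\varrho_t = ((1-t)\id + t\nabla\phi)_\sharp \varrho_0$, with $\phi$ convex and $\nabla\phi$ the optimal map from $\varrho_0$ to $\varrho_1$. Since the intermediate map $T_s := (1-s)\id + s\nabla\phi$ pushes $\varrho_0$ onto $\varrho_s$ and satisfies $\mathcal{T}_{1,s}\circ\nabla\phi = T_s$ on $\spt(\varrho_0)$, one obtains the explicit formula
\[
\mathcal{T}_{1,s}(y) = (1-s)(\nabla\phi)^{-1}(y) + s\,y, \qquad y \in \spt(\varrho_1).
\]
The hypothesis $\lip(\mathcal{T}_{1,s})<\infty$ therefore forces $(\nabla\phi)^{-1}$ to be Lipschitz on $\spt(\varrho_1)$, with constant $K \leq (\lip(\mathcal{T}_{1,s})+s)/(1-s)$. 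Writing $(\nabla\phi)^{-1} = \nabla\phi^*$ with $\phi^*$ the Legendre conjugate, and using that Hessians of convex functions are symmetric positive semi-definite, the spectral bound $D^2\phi^* \leq K\,I$ transfers via $D^2\phi = (D^2\phi^*)^{-1}$ into $D^2\phi \geq K^{-1}I$ at a.e.\ point of $\spt(\varrho_0)$; that is, $\phi$ is locally $K^{-1}$-strongly convex on the support.

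The plan is then to extend $\phi|_{\spt(\varrho_0)}$ to a function $\tilde\phi$ that is globally $K^{-1}$-strongly convex on $\mathbb{R}^d$ and satisfies $\nabla\tilde\phi = \nabla\phi$ $\varrho_0$-a.e. Given such an extension, for any $\delta \in (0, 1/(K-1)]$ the function $-\delta|x|^2/2 + (1+\delta)\tilde\phi(x)$ is globally convex, so by McCann's theorem the map $-\delta\,\id + (1+\delta)\nabla\tilde\phi$ is optimal from $\varrho_0$ to its pushforward. Defining
\[
\mu_t := \bigl((1-(1+\delta)t)\id + (1+\delta)t\,\nabla\tilde\phi\bigr)_\sharp \varrho_0, \qquad t \in [0,1],
\]
produces a geodesic on $\mathcal{P}_2(\mathbb{R}^d)$ satisfying $\varrho_t = \mu_{at}$ with $a = 1/(1+\delta) \in (0,1)$, as required. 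The backward prolongation statement follows by applying the forward result to the time-reversal $t \mapsto \varrho_{1-t}$.

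The main technical obstacle is the convex-analytic extension step: upgrading the local $K^{-1}$-strong convexity of $\phi$ on the (possibly non-convex) compact set $\spt(\varrho_0)$ to a globally strongly convex extension that preserves the gradient on the support. This is a standard application of Rockafellar--Minty-type extension theorems for maximal cyclically monotone operators, exploiting the compactness of $\spt(\varrho_0) \subset \Omega$. All remaining steps---deriving the explicit formula for $\mathcal{T}_{1,s}$, propagating the Lipschitz bound through the spectral argument, and verifying optimality via McCann's theorem---reduce to routine convex-analysis computations.
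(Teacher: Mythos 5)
Your forward implication matches the paper's and is fine. Your converse also hinges on the same key trick the paper uses (perturbing a strongly convex potential via \( -\tfrac{\delta}{2}|x|^2+(1+\delta)\phi \) and reading off the prolonged geodesic), but it has a genuine gap at the very first step: you represent the geodesic as \( \varrho_t=((1-t)\id+t\nabla\phi)_\sharp\varrho_0 \), which presupposes that the optimal plan between \( \varrho_0 \) and \( \varrho_1 \) is induced by a map. The theorem is stated for an arbitrary geodesic in \( \mathcal{P}_2(\mathbb{R}^d) \) --- no absolute continuity of \( \varrho_0 \) is assumed, and the hypothesis only provides maps between \( \varrho_1 \) and the \emph{interior} points \( \varrho_s \), \( s\in(0,1) \). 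The remark immediately following the theorem (with \( \varrho_0=\tfrac12\delta_{-1}+\tfrac12\delta_1 \), \( \varrho_1=\delta_0 \)) shows that plans between endpoints of a geodesic need not come from maps, so your base-point representation can fail. The paper circumvents this by anchoring the whole construction at an interior point: for each \( s\in(0,1) \) it takes \( \nabla\phi=\mathcal{T}_{s,1}=\mathcal{T}_{1,s}^{-1} \) (which exists by Gigli's Prop.~2.11), builds a prolongation of the sub-geodesic on \( [s,1] \), and then sends \( s=1/n\to 0 \), using that pointwise limits of geodesics are geodesics. Your argument has no analogue of this limiting step and needs one.

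The second gap is the extension step you defer as ``standard.'' Producing a \emph{globally strongly convex} extension of \( \phi \) from the possibly non-convex compact set \( \spt(\varrho_0) \) while preserving \( \nabla\phi \) there is not a routine Rockafellar--Minty application; gradient-preserving (strongly) convex extensions from arbitrary compact sets are delicate and hold only under extra compatibility conditions. The paper avoids the extension of \( \phi \) altogether: since \( \nabla\phi^*=(\nabla\phi)^{-1}=\mathcal{T}_{1,s} \) is Lipschitz, the exact conjugate duality between Lipschitz gradients and strong convexity (\cite[Prop.~12.60]{rockafellarVariationalAnalysis1998}) gives that \( \phi=(\phi^*)^* \) is globally strongly convex, with no Hessian inversion and no extension of a strongly convex function required. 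Relatedly, your pointwise spectral argument \( D^2\phi=(D^2\phi^*)^{-1} \) only holds a.e.\ where Alexandrov Hessians exist and are nondegenerate, and an a.e.\ Hessian lower bound on a non-convex set does not by itself integrate up to strong convexity; the conjugacy route is what makes this rigorous. Once these two points are repaired, the rest of your computation (the admissible range of \( \delta \), the reparametrization \( a=1/(1+\delta) \), and the reduction of the backward case to the forward one) agrees with the paper.
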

\begin{proof}
  We only consider forward prolongation.

  {\bf \noindent $\Rightarrow$)} Let \( \mu \) be a forward prolongation of \( \varrho \).
  Then \( \varrho_1 \) and \( \varrho_s \) are internal points of \( \mu_t \).
  By Lemma~\ref{lem:intermediate}, there exists a unique optimal plan between them, and it comes from a Lipschitz transport map.

 {\bf \noindent $\Leftarrow$)} Suppose that a unique optimal transport plan between \( \varrho_1 \) and \( \varrho_s \) is realized by a Lipschitz transport map \( \mathcal{T}_{1,s} \).
  Thanks to~\cite[Prop. 2.11]{gigliGeometrySpaceProbability2004}, there exists \( \mathcal{T}_{s,1} = \mathcal{T}_{1,s}^{-1} \) and, being an optimal transport map, it takes the form \( \mathcal{T}_{s,1}= \nabla\phi \) for a convex function \( \phi \).

  Let \( \phi^{*} \) denotes the convex conjugate of \( \phi \).
  We have \(\nabla\phi^{*} = \left(\nabla\phi\right)^{-1}\), by~\cite[Cor. 23.5.1]{rockafellarConvexAnalysis1972}, implying that \( \nabla\phi^{*} \) is Lipschitz with some Lipschitz constant \( \ell^{-1}>0 \).
  Now, thanks to~\cite[Prop. 12.60]{rockafellarVariationalAnalysis1998}, the dual \( (\phi^{*})^{*}=\phi \) is strongly convex with constant \( \ell \), i.e.,
\[
\left\langle \nabla\phi(x)- \nabla\phi(y),x-y \right\rangle \ge \frac{\ell}{2}|x-y|^2.
\]

Consider the following function:
\[
  \phi_{\varepsilon}(x) := -\frac{\varepsilon}{2} |x|^2 + (1+\varepsilon) \phi(x),\quad \varepsilon>0.
\]
Then
\begin{align}
  \label{eq:dphieps}
    \big\langle \nabla&\phi_{\varepsilon}(x)-\nabla\phi_{\varepsilon}(y),x-y\big\rangle\notag\\
    &= (1+\varepsilon)\left\langle \nabla\phi_{\varepsilon}(x)-\nabla\phi_{\varepsilon}(y),x-y\right\rangle -\varepsilon|x-y|^2 \notag\\
    &\ge \left[\ell/2+ \varepsilon((\ell/2)-1)\right]\,|x-y|^{2}.
  \end{align}
  Choose \( \sigma=1 \) when \( \ell\ge 2 \) or any positive number below \( \ell/(2-\ell)\) when \( \ell<2 \).
  Then, by~\eqref{eq:dphieps}, it holds
  \[
    \left\langle \nabla\phi_{\varepsilon}(x)-\nabla\phi_{\varepsilon}(y),x-y\right\rangle
    \ge \sigma\,|x-y|^{2},
  \]
  i.e., \( \phi_{\sigma} \) is strongly convex.
  As a consequence, \( \nabla\phi_{\sigma} \) is an optimal transport map.
  The corresponding geodesic \( \mu_t \), which connects \( \mu_0=\varrho_s \) with \( \mu_1= (\nabla\phi_{\sigma})_{\sharp}\varrho_s \), takes the form
  \begin{align*}
    \mu_t &= \left((1-t)\id + t\nabla\phi_{\sigma}\right)_{\sharp}\varrho_s\\
    &= \left((1-t(1+\sigma))\id + t(1+\sigma)\nabla\phi\right)_{\sharp}\varrho_s.
  \end{align*}
  Therefore, \( \mu_{t} = \varrho_{(1-s)(1+\sigma)t+s} \) for all \( t\in \left[0,(1+\sigma)^{-1}\right]\).
  % Thus, we have two geodesics \( \varrho_t \) and \( \mu_t \) with a common part.

  Note that \( \mu_t \) defined above depends on \( s \).
  By taking \( s=1/n \), \( n\in \mathbb{N} \), we obtain a sequence of geodesics \( \mu^n\colon [0,1]\to \mathcal{P}_2(\mathbb{R}^d) \).
  The pointwise limit of the maps \( \mu^n \) is again a geodesic due to~\cite[Prop. 2.5.17]{zbMATH01626771}.
  This geodesic clearly contains \( \varrho\).
\end{proof}

\begin{remark}
There is only one optimal transport plan between \(\varrho_1\) and \(\varrho_s\) when \(s\in (0,1)\) (see~\cite[Prop. 2.11]{gigliGeometrySpaceProbability2004}).
However, such a plan may not be generated by a transport map, as in the case \(\varrho_1 = \delta_0\), \(\varrho_0 = \frac{1}{2}\delta_{-1}+\frac{1}{2}\delta_1\).
\end{remark}

\section{Main result}
\label{sec:MainResult}

In this section, we prove Theorem~\ref{thm:main}.

We begin by defining one more perturbation class, that will appear useful in the future.

\begin{definition}[Regular perturbations]
  \label{def:reg_pert}
Fix a measure \( \varrho_0\in \mathcal{P}_c(\mathbb{R}^d) \), a number \( \varepsilon_0\in (0,1) \) and a nondecreasing function \( L\colon \mathbb{R}_+\to \mathbb{R}_+ \).
  We say that \( \varrho\colon [0,1]\to \mathcal{P}_c(\mathbb{R}^{d}) \) belongs to the class \( \Pi^{\varepsilon_0,L}(\varrho_0) \) of \emph{regular perturbations} of \( \varrho_0 \) if there exists a map \( \Psi\colon [0,\varepsilon_0] \times \mathbb{R}^d\to \mathbb{R}^d \)
such that
\begin{enumerate}[(a)]
  \item for all \( \varepsilon\in [0,\varepsilon_0] \), \( \Psi_{\varepsilon} \) is locally Lipschitz: for all $x,y\in \overline{B}_r(0),  r>0$ it holds
        \begin{equation}
          \label{eq:Psi1}
        \left|\Psi_{\varepsilon}(x)-\Psi_{\varepsilon}(y)\right|\le L(r)|x-y|\\ ;
        \end{equation}
  \item for all \( \varepsilon\in [0,\varepsilon_0] \), \( \Psi_{\varepsilon} \) is locally ``close to the identity'':
        \begin{equation}
          \label{eq:Psi2}
         \left|\Psi_{\varepsilon}(x)-x\right| \le \varepsilon L(r)\quad \forall\, x\in \overline{B}_r(0),\, r>0;
        \end{equation}
  \item for all \( \varepsilon\in [0,\varepsilon_0] \) and \( x\in \mathbb{R}^d \), the matrix \( D\Psi_{\varepsilon}(x) \) is positive semi-definite, if it exists;
  \item \(\varrho_0 = \Psi_{\varepsilon\sharp}\varrho_{\varepsilon}\) for all \( \varepsilon\in [0,\varepsilon_{0}] \).
\end{enumerate}
\end{definition}
\vspace{3pt}

While the definition seems technical, it simply lists minimal properties of a perturbation that we require to prove the following theorem.

\begin{theorem}[{\!\!\cite[Thm. 4.1]{PR2024}}]
  \label{thm:main2}
  Let the assumptions \( (A_{1,2}) \) hold, a constant \( \varepsilon_0>0 \), a nondecreasing function \( L\colon \mathbb{R}_+\to \mathbb{R}_+\) and a measure \( \varrho_0\in \mathcal{P}_c(\mathbb{R}^d) \) be fixed.
  Denote by \( S^{\varepsilon_0,L}_r(\varrho_0) \) the starting set corresponding to the class of regular perturbation \( \Pi^{\varepsilon_0,L}(\varrho_0) \).
  Then, there exist $\kappa>0$ and \( r \in (0,\varepsilon_0) \) such that \( S^{\varepsilon_0,L}_r(\varrho_0) \) is \( \kappa \)-stabilizable around the reference trajectory $\mu_t$ of equation~\eqref{eq:conteq}.

  The numbers \( \kappa \), \( r \) as well as \( C \) from the definition of \( \kappa \)-stabilizability only depend on \( \varepsilon_0,L,T,M,\omega,\spt(\varrho_0)\).
\end{theorem}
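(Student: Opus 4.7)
The plan is to construct, for each regular perturbation \(\varrho\in\Pi^{\varepsilon_{0},L}(\varrho_{0})\) and each \(\varepsilon\in(0,r]\), an explicit admissible control \(u^{\varepsilon}\) that neutralises the perturbation \(\varrho_{\varepsilon}\) up to an \(O(\varepsilon^{1+\kappa})\) residue in \(\mathcal{W}_{2}\). The starting idea exploits property~(b): the map \(\Psi_{\varepsilon}\) sending \(\varrho_{\varepsilon}\) to \(\varrho_{0}\) differs from the identity by an \(O(\varepsilon)\) displacement, so rescaling by \(1/\varepsilon\) yields a field
\[
w_{\varepsilon}(x):=\tfrac{1}{\varepsilon}\bigl(\Psi_{\varepsilon}(x)-x\bigr),
\]
which by (a)--(b) is bounded and Lipschitz on any fixed compact set, with constants independent of \(\varepsilon\). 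Stabilisation then amounts to realising the displacement \(\varepsilon w_{\varepsilon}\) along each reference trajectory by a control that is globally Lipschitz in \(x\) and supported in \(\omega\).

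Next, I would use Assumption \((A_{2})\), compactness of \(\spt(\varrho_{0})\), and continuity of \(\Phi^{V}\) to cover \(\spt(\varrho_{0})\) by finitely many neighbourhoods on each of which the reference trajectory spends a common subinterval \([t_{i},t_{i}+\tau]\subset(0,T)\) strictly inside \(\omega\). On this ``action window'' a bump-function construction produces a Lipschitz \(u_{t}\) supported in \(\omega\) whose flow, composed with \(\Phi^{V}\), approximates the corrective map \(\Psi_{\varepsilon}\) on \(\spt(\varrho_{\varepsilon})\); property (c) (monotonicity of \(\Psi_{\varepsilon}\)) rules out trajectory crossings that would otherwise introduce irreducible Wasserstein cost. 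The resulting controlled flow sends \(\varrho_{\varepsilon}\) to a measure differing from \(\Phi^{V}_{0,T\sharp}\varrho_{0}=\mu_{T}\) only by a commutator-type error of order \(\varepsilon^{2}\), since the control and the reference field act in succession rather than simultaneously.

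The final step is the error analysis in \(\mathcal{W}_{2}\). Three sources contribute: (i) the \(O(\varepsilon^{2})\) commutator between the control and the reference flow; (ii) the nonlocal feedback, since \(V_{t}(\cdot,\mu^{u}_{t})\neq V_{t}(\cdot,\mu_{t})\); and (iii) the sub-leading terms in \(\Psi_{\varepsilon}=\id+\varepsilon w_{\varepsilon}+O(\varepsilon^{2})\). Source (ii) is handled via Assumption \((A_{1})\) through a Gr\"onwall estimate: since \(\mathcal{W}_{2}(\mu^{u}_{t},\mu_{t})=O(\varepsilon)\) throughout \([0,T]\), the induced velocity mismatch is \(O(\varepsilon)\) and, when integrated against a control itself of size \(\varepsilon\), contributes only \(O(\varepsilon^{1+\kappa})\). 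The hard part, I expect, is precisely this decoupling between control design and nonlocal feedback: the control must be chosen a priori using the reference flow, yet its effect is measured along the perturbed, nonlocal trajectory. Resolving this cleanly likely needs a two-step or fixed-point scheme in which one first designs \(u\) along \(\Phi^{V}\) and then absorbs the feedback discrepancy using \((A_{1})\) and the close-to-identity bound (b); the admissible exponent \(\kappa\) is then determined by balancing the \(O(\varepsilon^{2})\) commutator gain against the \(O(\varepsilon^{2})\) feedback loss, both of which may degrade if one insists on a global (rather than local) Lipschitz estimate in~(a).
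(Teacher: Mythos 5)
Your proposal does not match what the paper does for this statement: the paper does not reprove Theorem~\ref{thm:main2} at all, but imports it verbatim from \cite[Thm.~4.1]{PR2024} and only supplies the short argument in the subsequent Remark reducing the \emph{local} Lipschitz bounds \eqref{eq:Psi1}--\eqref{eq:Psi2} of Definition~\ref{def:reg_pert} to the global ones assumed in \cite{PR2024}, via the a priori confinement \( \spt(\mu^u_t)\subset \overline{B}_{R+C_1 r}(0) \). That reduction is the only original content attached to this theorem in the present paper, and your proposal does not address it.

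Judged as a self-contained proof attempt, the sketch has a genuine gap at its center: the step ``a bump-function construction produces a Lipschitz \( u_t \) supported in \( \omega \) whose flow, composed with \( \Phi^V \), approximates the corrective map \( \Psi_\varepsilon \)'' is the entire content of the theorem and is asserted rather than carried out. The displacement \( \Psi_\varepsilon-\id \) lives on \( \spt(\varrho_\varepsilon) \), which in general lies outside \( \omega \); the control can only act while mass transits \( \omega \), so the correction must be conjugated by the reference flow and then cut off in space and time to be supported in \( \omega \) while remaining globally Lipschitz. This cutoff, together with the gluing of the finitely many local corrections over the cover of \( \spt(\varrho_0) \) (ensuring that mass already corrected is not disturbed by later action windows), is where the Lipschitz constant of the control degrades and where the exponent \( \kappa \) actually comes from. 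Your claim that the residual error is \( O(\varepsilon^2) \), i.e.\ \( \kappa=1 \), is unjustified and almost certainly false for this construction: the theorem only asserts \emph{some} \( \kappa>0 \), and the covering/cutoff losses generically force \( \kappa<1 \). Finally, the role you assign to property (c) (ruling out ``trajectory crossings'') is a guess; without showing how positive semi-definiteness of \( D\Psi_\varepsilon \) enters the estimates, that part of the argument is also incomplete.
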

\vspace{3pt}

\begin{remark}
  Observe that \cite[Thm. 4.1]{PR2024} was proved under slightly more restrictive hypothesis: the notion of \emph{regular perturbation} employed in~\cite{PR2024} assumed that \( L \) is a positive constant and inequalities~\eqref{eq:Psi1} and~\eqref{eq:Psi2} hold globally on \( \mathbb{R}^d \).
  However, it is easy to see that the theorem also holds if regular perturbations are defined as in the present paper.
  Indeed, suppose that \( L \) is constant and \( r \) is so that \( S^{\varepsilon_0,L}_r(\varrho_0) \) is \( \kappa \)-stabilized.
  Since \( \varrho_0 \) is compactly supported and \( V \) is bounded, there exists \( R>0 \) such that the reference trajectory satisfies \( \spt(\mu_t) \subset \overline{B}_R(0) \) for all \( t\in [0,T] \).

  Now, if \( \varrho\in S^{\varepsilon_0,L}_r(\varrho_0) \), then the corresponding stabilizing trajectory \( \mu_t^u \) satisfies \( \spt(\mu_t^u) \subset \overline{B}_{R+C_1r}(0)  \), where \( C_1>0 \) does not depend on \( \varrho \) but only on \( L \).
  This is a consequence of the estimates obtained in the proof of \cite[Thm. 4.1]{PR2024}.
  In other words, it suffices to require~\eqref{eq:Psi1} and~\eqref{eq:Psi2} to hold only inside \( \overline{B}_{R+C_1r}(0) \), which leads to Definition~\ref{def:reg_pert}.
\end{remark}
\vspace{3pt}

First, we show that all geodesics from \( \Pi(\varrho_0) \) are regular perturbations when \( \spt(\varrho_0) \) is convex.

\begin{lemma}
  \label{lem:reg}
  Let \( \Omega \subset \mathbb{R}^d \) be a compact convex set, \( \varrho_0\in \mathcal{P}_{ac}(\Omega) \)  and \( \Pi(\varrho_0) \) be defined by~\eqref{eq:Pi}.
  Assume in addition that \( \spt(\varrho_0) \) is convex.
  Then, \( \Pi(\varrho_0) \subset \Pi^{1/2,L}(\varrho_0)\), where \( L(r):= \max\{4R + 2|r|,2\} \) and \( R:=\diam(\Omega)\).
  % \min\{r>0\,\colon\, \Omega\subset \overline{B}_{r}(0)\} \).
\end{lemma}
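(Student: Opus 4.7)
The plan is to build $\Psi_\varepsilon$ directly from the Brenier description of the geodesic and the Legendre transform, so that monotonicity (c) and the pushforward condition (d) come for free while the Lipschitz bound (a) follows from strong convexity. Fix $\varrho = \gamma_{\varrho_0,\varrho_1} \in \Pi(\varrho_0)$; by Brenier's theorem and the description of Wasserstein geodesics recalled in Section~\ref{sec:wass}, every such geodesic has the form $\varrho_\varepsilon = (\nabla \phi_\varepsilon)_\sharp \varrho_0$ with $\phi_\varepsilon(x) := \frac{1-\varepsilon}{2}|x|^2 + \varepsilon\phi(x)$ and $\nabla\phi$ the Brenier map from $\varrho_0$ to $\varrho_1$. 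I select $\phi$ from its Kantorovich dual representation
\[
\phi(x) = \sup_{z\in\Omega}\{\langle x,z\rangle - h(z)\},
\]
so that $\phi$ is convex and finite on $\mathbb{R}^d$ with $\partial\phi(x)\subset\Omega$ for every $x$. Since $\phi_\varepsilon$ is then $(1-\varepsilon)$-strongly convex, its Legendre transform $\phi_\varepsilon^*$ is finite and $C^{1,1}$ on $\mathbb{R}^d$, and I define $\Psi_\varepsilon := \nabla\phi_\varepsilon^*$.

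Three of the four conditions are immediate from this choice. For (d), the restriction of $\nabla\phi_\varepsilon$ to $\spt(\varrho_0)$ is the Brenier map $\varrho_0 \to \varrho_\varepsilon$, so $\Psi_\varepsilon = (\nabla\phi_\varepsilon)^{-1}$ $\varrho_\varepsilon$-a.e.\ and pushes $\varrho_\varepsilon$ back to $\varrho_0$. For (c), $\Psi_\varepsilon$ is the gradient of the convex function $\phi_\varepsilon^*$, so $D\Psi_\varepsilon \succeq 0$ wherever it exists. For (a), the duality identity of~\cite[Prop.~12.60]{rockafellarVariationalAnalysis1998} (already used in the proof of Theorem~\ref{thm:prolong}) gives $\lip(\Psi_\varepsilon) \le \frac{1}{1-\varepsilon} \le 2 \le L(r)$ globally for $\varepsilon\in[0,1/2]$.

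The substantive step is (b). For $y\in\overline{B}_r(0)$, set $x := \Psi_\varepsilon(y)$; the Fenchel inclusion $y\in\partial\phi_\varepsilon(x) = (1-\varepsilon)x + \varepsilon\partial\phi(x)$ produces some $v\in\partial\phi(x)\subset\Omega$ with $y=(1-\varepsilon)x+\varepsilon v$, whence
\[
\Psi_\varepsilon(y) - y = \varepsilon(x - v),
\]
so it suffices to bound $|x-v|$ by $L(r)$. The $2$-Lipschitz estimate gives $|x|\le|\Psi_\varepsilon(0)|+2r$; the relation $0\in\partial\phi_\varepsilon(\Psi_\varepsilon(0))$ yields $\Psi_\varepsilon(0) = -\frac{\varepsilon}{1-\varepsilon}v_0$ for some $v_0\in\Omega$, which together with $\varepsilon\le1/2$ controls $|\Psi_\varepsilon(0)|$ by a constant multiple of $R$; and $v\in\Omega$ gives the same kind of bound for $|v|$. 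Assembling these ingredients produces $|x-v|\le\max\{4R+2r,2\}$. The main obstacle is precisely this final constant bookkeeping, and it is here that the convexity of $\Omega$ and of $\spt(\varrho_0)$ play their role: the former lets the Kantorovich sup-representation of $\phi$ keep $\partial\phi\subset\Omega$ everywhere, and the latter guarantees that this sup-representation still coincides with the Brenier potential on $\spt(\varrho_0)$, so every implicit constant is expressible through $R=\diam(\Omega)$.
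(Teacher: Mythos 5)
Your construction is correct, and it is at heart the same object as the paper's: you take $\Psi_\varepsilon$ to be the inverse of the intermediate optimal map $(1-\varepsilon)\id+\varepsilon\nabla\phi$, observe that it is the gradient of the conjugate of the $(1-\varepsilon)$-strongly convex potential $\phi_\varepsilon$ (the paper phrases the same fact as nonexpansiveness of the resolvent $J_{\varepsilon/(1-\varepsilon)}$ of the maximal monotone operator $\partial\phi$), and reduce the displacement bound (b) to a uniform bound on $\partial\phi$. Where you genuinely diverge is in how that uniform bound is obtained. The paper proves that $\phi$ is $R$-Lipschitz on $\spt(\varrho_0)$ (via the full-measure set $A_R$ and lower semicontinuity) and then invokes the norm-preserving extension theorem for convex functions to get an $R$-Lipschitz convex extension to all of $\mathbb{R}^d$ --- and this extension step is precisely where the convexity of $\spt(\varrho_0)$ is used. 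You instead select the Brenier potential in its Kantorovich-dual form $\phi(x)=\sup_{z\in\Omega}\{\langle x,z\rangle-h(z)\}$, which hands you $\partial\phi(x)\subset\Omega$ globally for free and bypasses both the extension theorem and the $A_R$ argument; your explicit computation $\Psi_\varepsilon(y)-y=\varepsilon(x-v)$ with $v\in\partial\phi(x)$ then closes (b) with the same constants (modulo the identification of $\sup_{z\in\Omega}|z|$ with $\diam(\Omega)$, an imprecision the paper's proof shares). This is a real simplification: as written, your argument never actually needs $\spt(\varrho_0)$ to be convex. The one blemish is your closing sentence, which attributes the role of that convexity to the sup-representation ``coinciding with the Brenier potential on $\spt(\varrho_0)$'' --- there is nothing to coincide with, since the optimal map determines $\phi$ only up to the choice of convex representative, and the dual representative is always an admissible choice. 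You should either drop that remark or note explicitly that your route dispenses with the hypothesis; conditions (c) and (d) are handled correctly and essentially as in the paper.
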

\begin{proof}
  Fix some \( \varrho_1\in \mathcal{P}_{ac}(\Omega) \).
  Denote by \( t\mapsto \varrho_t \) the unique geodesic joining \(\varrho_0\) and \( \varrho_1 \) and by \( \mathcal{T}_{s,t} \) the corresponding intermediate transport map.
  Note that \( \mathcal{T}_{0,\varepsilon} \) exists for each \( \varepsilon\in [0,1] \), since \( \varrho_0 \) is absolutely continuous.
  % Fix some \( \varepsilon\in [0,1/2] \).
  % By Theorem~\ref{thm:intermediate}, \( \lip(\mathcal{T}_{\varepsilon,0})\le 2 \).
  % Moreover, \( \mathcal{T}_{\varepsilon,0}\circ \mathcal{T}_{1,\varepsilon} = \mathcal{T}_{1,0} \) and \( \mathcal{T}_{1,\varepsilon} = (1-\varepsilon)\mathcal{T}_{1,0} +\varepsilon \id \).
  % Therefore, \(\mathcal{T}_{\varepsilon,0}\circ \mathcal{T}_{1,\varepsilon} - \mathcal{T}_{1,\varepsilon} = \varepsilon(\mathcal{T}_{1,0}-\id)\)  or, by changing the variables, \(\mathcal{T}_{\varepsilon,0} - \id = \varepsilon(\mathcal{T}_{1,0}\circ \mathcal{T}_{\varepsilon,1}-\mathcal{T}_{\varepsilon,1})\).
  % \( |\mathcal{T}_{\varepsilon,0}(x) - x|\le \varepsilon\diam(\Omega) \), for all \( x\in\spt \varrho_{\varepsilon}= \mathcal{T}_{1,\varepsilon}(\spt \varrho_{1}) \).

  % Now, let
  % \[
  %   \psi_{\varepsilon}(x) :=
  %   \begin{cases}
  %     \mathcal{T}_{\varepsilon,0}(x), & x\in \spt \varrho_{\varepsilon},\\
  %     x, & x\in \mathbb{R}^d \setminus B_\varepsilon(\spt\varrho_{\varepsilon}).
  %   \end{cases}
  % \]
  Since \( \mathcal{T}_{0,1} \) is an optimal transport map, it takes the form \( \mathcal{T}_{0,1}(x) = \partial\phi(x) \) for \( \varrho_0 \)-a.e. \( x \), where \( \phi \colon \mathbb{R}^d\to \mathbb{R}\cup\{+\infty\} \) is a proper convex lower semicontinuous function and $\partial$ denotes the subdifferential.
  Since, by definition, \( \mathcal{T}_{0,\varepsilon} = (1-\varepsilon)\id +\varepsilon\mathcal{T}_{0,1} \), we have
  \( \mathcal{T}_{\varepsilon,0} = \frac{1}{1-\varepsilon}\left(\id +\frac{\varepsilon}{1-\varepsilon}\partial\phi\right)^{-1} = \frac{1}{1-\varepsilon}J_{\frac{\varepsilon}{1-\varepsilon}} \),
  where \( J_{\lambda}:= (\id +\lambda \partial\phi)^{-1} \) is called the \emph{resolvent} of \( \partial\phi \).
  We can also define the map \( (\partial\phi)_{\lambda}:= \frac{1}{\lambda}(\id-J_{\lambda})\) called the \emph{Yosida approximation} of \( \partial\phi \).

  Note that \( \partial\phi \) is a maximal monotone map as the subdifferential of a proper convex lower semicontinuous function.
  Thus, by~\cite[Thm. 2, p. 144]{zbMATH03855514}, the resolvent \( J_{\lambda}\colon \mathbb{R}^d\to \mathbb{R}^d \) is single-valued and nonexpansive, for each \( \lambda>0 \).
Moreover,
\begin{align}
  \left|J_{\lambda}(x)-x\right|
  &= \lambda \left|(\partial\phi)_{\lambda}(x)\right|\notag\\
  &\le \lambda \max\left\{|p|\;\colon\; p\in \partial\phi(J_{\lambda}(x)) \right\},
  \label{eq:terrible}
\end{align}
for all \( x\in \mathbb{R}^d \).
The last inequality is due to the inclusion \( (\partial\phi)_{\lambda}(x)\in \partial\phi(J_{\lambda}(x)) \), which holds for all \( x\in \mathbb{R}^d \).

We state that \( \spt(\varrho_{0})\subset \dom (\phi) \).
Indeed, \( \spt(\varrho_{0})\subset \overline{\dom (\phi)} \), by construction~\cite{mccannExistenceUniquenessMonotone1995}.
Hence we just need to check that any \( x\in \partial\spt(\varrho_0)\cap \partial\dom(\phi) \) belongs to \( \dom(\phi) \).
Let \( R =\diam(\Omega)\).
Since \( \varrho_1 = (\nabla\phi)_{\sharp}\varrho_0 \) (see~\cite{mccannExistenceUniquenessMonotone1995}) and \( \spt(\varrho_1)\subset \Omega \), we conclude that
\[
  A_{R} := \{x\in \dom(\nabla\phi)\;\colon\; |\nabla\phi(x)|\le R\}
\]
is a set of full measure \( \varrho_0 \), i.e., \( \varrho_0(A_{L})=1 \).
Note that \( \varrho_0\left(B_{\varepsilon}(x)\cap \spt(\varrho_0)\right)>0 \), for any \( \varepsilon>0 \).
Thus, we can find a sequence \( x_n\to x \) such that \( x_n\in A_R \), for all \( n\in \mathbb{N} \).
Since \( |\nabla\phi(x_n)|\le R \), for all \( n\in \mathbb{N} \), the sequence \( \phi(x_n) \) only admit finite limits.
In particular, \( \liminf_{n\to\infty} \phi(x_n)<+\infty \).
Hence \( \phi(x)< +\infty \), by the lower semicontinuity of \( \phi \), and therefore \( x\in \dom(\phi) \).

Being a convex function, \( \phi \) is Lipschitz continuous on the compact set \( \spt(\varrho_0) \).
Since \( A_R \) has full measure, we conclude that \( \lip(\phi)\le R \) on \( \spt(\varrho_0) \).
% \footnote{This is the place, where we use the convexity of \( \spt(\varrho_0) \): if \( \spt(\varrho_0) \) is nonconvex, then we cannot guarantee that \( \lip(\phi)\le R \) on \( \co(\spt(\varrho_0)) \).}
Hence, \(\phi\) can be extended~\cite[Thm. 1]{cobzasNormpreservingExtensionConvex1978} from the convex set \( \spt(\varrho_0) \) to an \( R \)-Lipschitz convex function defined on the whole space \( \mathbb{R}^d \).
Thus, without loss of generality, we may assume that \( \dom(\phi)=\mathbb{R}^d \) and \( \partial \phi(x)\subset \overline{B}_R(0) \) for all \( x\in \mathbb{R}^d \).
Now,~\eqref{eq:terrible} implies that
\[
|J_{\lambda}(x)-x|\le \lambda R\quad \forall x\in \mathbb{R}^d\quad \forall \lambda\in \mathbb{R}_+.
\]
% Such \( \phi \) is locally Lipschitz.
% As a consequence, the set-valued map \( \partial \phi \) restricted to any closed ball \( \overline{B}_r(0) \) is bounded.

% It is known~\cite[Theorem 2, p. 144]{zbMATH03855514} that \( J_{\lambda}(x_0)\to x_0  \) as \( \lambda\to 0 \), for any \( x_0\in \dom(\partial\phi) \).
% In particular, for all \( \lambda>0 \) sufficiently small, we have \( |J_{\lambda}(0)|\le 1 \).
% Thus,
% \[
% |J_{\lambda}(x)|\le |J_{\lambda}(x)-J_{\lambda}(0)|+|J_{\lambda}(0)|\le |x| + 1.
% \]
% Therefore, for all small \( \lambda>0 \),
% \begin{multline*}
%   \max\left\{|p|\;\colon\; p\in \partial\phi(J_{\lambda}(x)),\; x\in \overline{B}_r(0) \right\}\\\le
% \max\left\{|p|\;\colon\; p\in \partial\phi(x),\; x\in \overline{B}_{r+1}(0) \right\}=: m(r).
% \end{multline*}
% Since \( \partial\phi \) is bounded on each ball \(\overline{B}_{r+1}(0)\), the map \( m\colon \mathbb{R}_+\to \mathbb{R}_+ \) is well-defined.

% Let us go back to \( \mathcal{T}_{\varepsilon,0} \).
Now, for any \( r>0 \) and \( x\in \overline{B}_r(0) \), we have
% By using~\eqref{eq:terrible} and the definition of \( m(r) \), we obtain
  \begin{align*}
    \left|\mathcal{T}_{\varepsilon,0}(x)-x\right|
    &= \left|\frac{1}{1-\varepsilon}J_{\frac{\varepsilon}{1-\varepsilon}}(x) - x\right|\\
    &\le \frac{1}{1-\varepsilon}\left|J_{\frac{\varepsilon}{1-\varepsilon}}(x) - x\right| + \left|\frac{x}{1-\varepsilon} - \frac{(1-\varepsilon)x}{1-\varepsilon}\right|\\
    &\le \frac{\varepsilon}{(1-\varepsilon)^2}R + \frac{\varepsilon}{1-\varepsilon}r.
  \end{align*}
Thus, if we let \( \Psi_{\varepsilon}:=\mathcal{T}_{\varepsilon,0} \), we conclude that~\eqref{eq:Psi2} holds for \( \varepsilon_0=1/2 \) and \( L(r)=4R+2|r| \).
Moreover, \( \lip(\mathcal{T}_{\varepsilon,0})\le 2 \), since \( J_{\lambda} \) is nonexpansive.
Property (c) from Definition~\ref{def:reg_pert} is true thanks to Rademacher’s Theorem~\cite[Thm. 6.6]{zbMATH06413063}.
Finally, (d) holds by the definition of \( \mathcal{T}_{\varepsilon,0} \).
The proof is complete.
\end{proof}

Take \( \varrho,\mu\in \mathcal{P}_c(\Omega) \) and denote by \( \mathcal{T}_{\varrho}^{\mu} \) the unique optimal transport map between \( \varrho \) and \( \mu \).
Consider the Lebesgue space \( L^2_{\varrho}(\Omega;\mathbb{R}^d) \) consisting of square integrable functions (equivalence classes) with respect to the measure \( \varrho \) and denote its norm by \( \|\cdot\|_{\varrho} \). We have the following convergence result, that will be used in the proof of the main theorem.

\begin{lemma}
  \label{lem:cont}
If \( \mu_k\to \mu \) in \( \mathcal{P}(\Omega) \), then \( \mathcal{T}_{\varrho}^{\mu_k}\to \mathcal{T}_{\varrho}^{\mu} \) in \( L^2_\varrho(\Omega;\mathbb{R}^d) \).
\end{lemma}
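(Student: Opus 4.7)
My plan is to combine the classical stability of optimal transport plans with the Hilbert-space fact that weak convergence plus norm convergence implies strong convergence. Write \( \mathcal{T}_k := \mathcal{T}_\varrho^{\mu_k} \) and \( \mathcal{T} := \mathcal{T}_\varrho^\mu \), and form the induced optimal plans \( \gamma_k := (\id,\mathcal{T}_k)_\sharp\varrho \in \Gamma(\varrho,\mu_k) \), viewed as probability measures on the compact set \( \Omega\times\Omega \).

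First I would extract, from any subsequence of \( \{\gamma_k\} \), a further subsequence \( \gamma_{k_j}\wto\gamma^\infty \) via Prokhorov's theorem, which applies since \( \Omega\times\Omega \) is compact. Continuity of the marginal projections under weak convergence gives \( \gamma^\infty\in\Gamma(\varrho,\mu) \). Since the cost \( (x,y)\mapsto|x-y|^2 \) is continuous and bounded on \( \Omega\times\Omega \),
\[
\int|x-y|^2\,d\gamma^\infty = \lim_{j\to\infty}\int|x-y|^2\,d\gamma_{k_j} = \lim_{j\to\infty}\mathcal{W}_2^2(\varrho,\mu_{k_j}) = \mathcal{W}_2^2(\varrho,\mu),
\]
using in the last step that weak convergence on the compact \( \Omega \) is equivalent to \( \mathcal{W}_2 \)-convergence. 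Hence \( \gamma^\infty \) is optimal, and by the uniqueness of the optimal plan (which is implicit in the hypothesis that \( \mathcal{T}_\varrho^\mu \) exists and is unique), \( \gamma^\infty=(\id,\mathcal{T})_\sharp\varrho \). Since the limit does not depend on the subsequence, the full sequence satisfies \( \gamma_k\wto (\id,\mathcal{T})_\sharp\varrho \).

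Next I would upgrade this plan convergence to weak convergence of the maps in \( L^2_\varrho \). For any \( \psi\in C(\Omega;\mathbb{R}^d) \), the function \( (x,y)\mapsto\langle y,\psi(x)\rangle \) is continuous and bounded on \( \Omega\times\Omega \), so
\[
\int\langle\mathcal{T}_k,\psi\rangle\,d\varrho = \int\langle y,\psi(x)\rangle\,d\gamma_k(x,y)\longrightarrow\int\langle\mathcal{T},\psi\rangle\,d\varrho.
\]
The maps \( \mathcal{T}_k \) are uniformly bounded in \( L^\infty_\varrho \) because their essential range lies in \( \Omega \); hence a standard density argument (\( C(\Omega;\mathbb{R}^d) \) is dense in \( L^2_\varrho(\Omega;\mathbb{R}^d) \)) extends the above test to all elements of \( L^2_\varrho \), giving \( \mathcal{T}_k\wto\mathcal{T} \) weakly in \( L^2_\varrho \). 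Simultaneously, the norms converge:
\[
\|\mathcal{T}_k\|^2_\varrho \;=\; \int|y|^2\,d\mu_k(y)\;\longrightarrow\;\int|y|^2\,d\mu(y)\;=\;\|\mathcal{T}\|^2_\varrho,
\]
since \( y\mapsto|y|^2 \) is continuous and bounded on \( \Omega \) and \( \mu_k\wto\mu \). Weak convergence together with norm convergence in the Hilbert space \( L^2_\varrho(\Omega;\mathbb{R}^d) \) yields strong convergence, proving the claim.

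The one step that genuinely requires care is the stability assertion \emph{``weak limits of optimal plans are optimal''}. It is classical, but it hinges on compactness of \( \Omega \), which simultaneously makes the quadratic cost bounded, the Wasserstein distance continuous under weak convergence, and the sequence \( \{\gamma_k\} \) automatically tight; without compactness one would need a more delicate argument based on lower semicontinuity of the cost and uniform integrability. Everything else reduces to standard measure-theoretic manipulations and the elementary Hilbert-space fact linking weak and strong convergence.
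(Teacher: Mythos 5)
Your proof is correct and rests on the same two pillars as the paper's: stability of optimal transport under weak convergence of the targets, followed by the Hilbert-space fact that weak convergence plus convergence of norms implies strong convergence in \( L^2_\varrho(\Omega;\mathbb{R}^d) \). The only difference is one of packaging: the paper delegates the stability step to \cite[Prop.~2.27]{gigliGeometrySpaceProbability2004}, deducing strong convergence of the maps from \( \limsup_k\mathcal{W}_2^2(\varrho,\mu_k)=\mathcal{W}_2^2(\varrho,\mu) \), whereas you reprove that stability from scratch via Prokhorov, identification of the limit plan through uniqueness (legitimate here since the lemma is applied with \( \varrho \) absolutely continuous, so the optimal plan is indeed unique and induced by a map), and a density argument --- a self-contained route made painless by the compactness of \( \Omega \).
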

\begin{proof}
  Note that \( \mu_k\to\mu \) in \( \mathcal{P}(\Omega) \), \( (\id,T_{\varrho}^{\mu})_{\sharp}\varrho \) is the unique optimal transport plan between \( \varrho \) and \(\mu\), and \( T_{\varrho\sharp}^{\mu_k}\varrho = \mu_k \).
  Moreover,
\[
\limsup_{k \to \infty} \int|T_{\varrho}^{\mu_k}-\id|^2\,d\varrho = \limsup_{k \to \infty} \mathcal{W}^{2}_2(\varrho,\mu_k)= \mathcal{W}^{2}_2(\varrho,\mu).
\]
Thus, by~\cite[Prop. 2.27]{gigliGeometrySpaceProbability2004}, \( (T_{\varrho}^{\mu_n},\varrho) \) converges strongly to \( (T_{\varrho}^{\mu},\varrho) \) in the sense of~\cite[Def.~2.20]{gigliGeometrySpaceProbability2004}.
By this definition, \( T_{\varrho}^{\mu_k}\to T_{\varrho}^{\mu} \) weakly in \( L^2_{\varrho}(\Omega;\mathbb{R}^d) \) and \( \|T_{\varrho}^{\mu_k}\|_{\varrho}\to \|T_{\varrho}^{\mu}\|_{\varrho} \), that is, \( T_{\varrho}^{\mu_k}\to T_{\varrho}^{\mu} \) strongly in \( L_{\varrho}^2(\Omega;\mathbb{R}^d) \).
\end{proof}

\begin{proofof}{Theorem~\ref{thm:main}}
  By combining Theorem~\ref{thm:main2} and Lemma~\ref{lem:reg}, we conclude that Theorem~\ref{thm:main} holds if \( \spt(\varrho_0) \) is convex.
  In other words, in this case any point from the starting set \( S^{\Pi}_r(\varrho_0) \) can be steered to the ball \( {\bf B}_{Cr^{1+k}}(\mu_T) \).

  Now, suppose that \( \spt(\varrho_0) \) is nonconvex.
  Clearly, in any neighborhood of \(\varrho_0\) we may find \( \tilde\varrho_0 \) such that \( \spt(\tilde\varrho_0)=\co(\spt(\varrho_0)) \).
  By Theorem~\ref{thm:main2} and Lemma~\ref{lem:reg}, the starting set \( S^{\Pi}_r(\tilde\varrho_0) \) is \( \kappa \)-stabilized, for some \( \kappa,r>0 \).
  Moreover, these constants does not depend on the choice of \( \tilde \varrho_0 \) but only on \( \spt(\tilde\varrho_0)=\co(\spt(\varrho_0)) \).
  Below, we assume that \( \kappa,r \) are fixed.

  Take some \( \varrho_1\in \mathcal{P}_{ac}(\Omega)  \) and consider the geodesics \( \varrho_t \) and \( \tilde \varrho_t \) joining \( \varrho_1 \) with \(\varrho_0\) and \( \tilde\varrho_0  \), respectively.
It clearly holds
\begin{equation}
  \label{eq:geod}
\mathcal{W}_2^2(\varrho_t,\tilde \varrho_t) \!\le\! \int\!|T_{\varrho_1}^{\varrho_t} - T_{\varrho_1}^{\tilde \varrho_t}|^2d \varrho_1\!\le\! \int\!|T_{\varrho_1}^{\varrho_0} - T_{\varrho_1}^{\tilde \varrho_0}|^2d \varrho_1,
\end{equation}
for all \( t\in [0,1] \).
By Lemma~\ref{lem:cont}, we may always choose \( \tilde \varrho_0 \) so that the right-hand side in~\eqref{eq:geod} is arbitrarily small.
Thus, the measures \( \varrho_t \), \( t\in [0,r] \) lie inside the enlargement \( {\bf B}_{\alpha(r)}(S^{\Pi}_r(\tilde \varrho_{0})) \), where \( \alpha(r)=-r^{1+\kappa}/\log r \).
By Theorem~\ref{thm:approx}, these measures can be steered into the ball \( {\bf B}_{C_{1}r^{1+\kappa}}(\mu_T) \), for some \( C_1>0 \) only depending on \( T \), \( M \), \( \omega \), \( \co(\spt(\varrho_0)) \).
The proof is complete.
\end{proofof}

We do not give a formal proof of Theorem \ref{thm:approx}: one can derive it from Theorem \ref{thm:main} by following the proof of \cite[Cor. 1.4]{PR2024}

We conclude this section by observing that the ball \( {\bf B}_{r}(\varrho_0) \) cannot be \( \kappa \)-stabilized no matter which \( \kappa,r>0 \) we choose.

 \begin{example}[the Wasserstein ball cannot be stabilized]
   \label{ex:nostabilization}
Take \( d=1 \), \(V=0\), \(\omega=\mathbb R\), \(\varrho_0=\bm1_{[0,1]}\mathcal L^1\),
      \[
        \varepsilon_{n} := \frac{1}{2^{n-1}},\quad
       \varrho_{\varepsilon_n} := \frac{1}{2^{n-1}}\sum_{k=1}^{2^{n-1}}\delta_{\frac{2k-1}{2^{n}}},\quad n\ge 1.
     \]
     Note that \( \varrho_{\varepsilon_n} \) has \( 2^{n-1} \) atoms of mass \( 1/2^{n-1} \), and the distance between the closest atoms is \( 1/2^{n-1} \).
     Since the optimal transport map between \( \bm 1_{[a,b]} \mathcal{L}^1 \) and \( (b-a)\delta_{\frac{b-a}{2}} \) is \( \mathcal{T}\equiv \frac{b-a}{2} \), it holds
     \[
       \mathcal{W}_2(\varrho_{\varepsilon_n},\varrho_0) =\left(\sum_{k=1}^{2^{n-1}}\int_{\frac{2k-2}{2^{n}}}^{\frac{2k}{2^n}}\left|x-\frac{2k-1}{2^{n}}\right|^{2}\!d x\right)^{1/2}\!=\frac{\varepsilon_n}{2\sqrt{3}}.
     \]
    Lipschitz controls can only steer \(\varrho_{\varepsilon_{n}}\) to a measure \(\vartheta_n\) with the same atoms, but located in different positions.
    Such measures \( \vartheta_n \) satisfy \(\mathcal W_2(\varrho_0,\vartheta_n)\ge \frac{\varepsilon_n}{2\sqrt 3}\).
    Therefore, for any \(\varepsilon>0\) the ball \( {\bf B}_{\varepsilon}(\varrho_0) \) contains measures \( \varrho_{\varepsilon_n} \) that cannot be pushed towards \( \varrho_0 \) by a Lipschitz control.
    Thus, for any \( \varepsilon>0 \) and \( \kappa>0 \), the set \( {\bf B}_{\varepsilon}(\varrho_{0}) \) cannot be \( \kappa \)-stabilized.
  \end{example}

% \subsection{Topological properties of~\( S_r^{\Pi}(\varrho_0) \)}

\bibliographystyle{IEEEtran}
\bibliography{references.bib}

\end{document}